\documentclass[12pt]{amsart}
\topmargin0in
\textheight8.5in
\oddsidemargin0.2in
\evensidemargin0.2in
\textwidth6in
\advance\hoffset by -0.5 truecm
\usepackage{amsmath,amscd}
\usepackage{amssymb}
\usepackage[bbgreekl]{mathbbol}
\usepackage{amsthm}
\usepackage{array}
\usepackage{graphicx}
\usepackage{color}

\DeclareSymbolFontAlphabet{\mathbb}{AMSb}
\DeclareSymbolFontAlphabet{\mathbbl}{bbold}

\usepackage{mathrsfs}
\usepackage{hyperref}

\usepackage[colorinlistoftodos]{todonotes}

\usepackage{comment}

\usepackage{calc}
             {\begin{list}{\arabic{enumi}.}{\usecounter{enumi}%
              \setlength{\labelsep}{0.5em}%
              \settowidth{\labelwidth}{\arabic{enumi}.}%
              \setlength{\leftmargin}{\labelwidth+\labelsep}}}%
             {\end{list}}

\newtheorem{Theorem}{Theorem}[section]

\newtheorem{Lemma}[Theorem]{Lemma}

\newtheorem{Proposition}[Theorem]{Proposition}

\theoremstyle{definition}

\newtheorem{Definition}[Theorem]{Definition}

\newtheorem*{ExampleNoNumber}{Example}

\newtheorem{Remark}[Theorem]{Remark}

\numberwithin{equation}{section}

\def \dim{{\mbox {dim}}\,}

\def\V{\mbox{Var}}

\def\R\re
\def\V{\bf V}

\def \re{{\mathbb R}}
\def \mR{{\mathbb R}}

\def \mC{{\mathbb C}}

\def \V{{\bf V}}

\newcommand{\mc}{\mathcal}
\newcommand{\pl}{\partial}

\newcommand{\abs}[1]{\lvert #1 \rvert}
\newcommand{\norm}[1]{\lVert #1 \rVert}

\newcommand{\etilde}{\,\tilde{\rule{0pt}{6pt}}\,}

\newcommand{\eps}{\varepsilon}

\newcommand{\p}{\partial}

\def \vd{\overset{\tt{v}}{\nabla}}
\def \hd{\overset{\tt{h}}{\nabla}}
\def \vdiv{\overset{\tt{v}}{\mbox{\rm div}}}
\def \hdiv{\overset{\tt{h}}{\mbox{\rm div}}}

\def \vdelta{\overset{\tt{v}}{\delta}}
\def \ehd{\overset{\smash{\tt{h}}}{\overline{\nabla}}}
\def \ehdp{\overset{\smash{\tt{h}}}{\overline{\nabla}} {\hspace{.5pt}}^{\parallel}}
\newcommand{\ip}[2]{\left\langle#1,#2\right\rangle}
\newcommand{\vnu}{\ip{v}{\nu}}

\newcounter{sidenote}
\setlength{\marginparwidth}{.8in}

\begin{document}

\title[A Sharp stability estimate for tensor tomography]{A Sharp stability estimate for tensor tomography in non-positive curvature} 

\author[G.P. Paternain]{Gabriel P. Paternain}
\address{ Department of Pure Mathematics and Mathematical Statistics,
University of Cambridge,
Cambridge CB3 0WB, UK}
\email {g.p.paternain@dpmms.cam.ac.uk}

\author[M. Salo]{Mikko Salo}
\address{Department of Mathematics and Statistics, University of Jyv\"askyl\"a}
\email{mikko.j.salo@jyu.fi}




\begin{abstract} We consider the geodesic X-ray transform acting on solenoidal tensor fields on a compact simply connected manifold with strictly convex boundary and non-positive curvature. We establish a stability estimate
of the form $L^2\mapsto H^{1/2}_{T}$, where the $H^{1/2}_{T}$-space is defined using the natural parametrization of geodesics as initial boundary points and incoming directions (fan-beam geometry); only tangential derivatives at the boundary are used. The proof is based on the Pestov identity with boundary term localized in frequency.

\end{abstract}

\maketitle


\section{Introduction}

To motivate our results, let us begin with the simplest case of the Radon transform in $\mR^2$ in parallel beam geometry (see \cite{Natterer} for more details).

\begin{ExampleNoNumber}
If $f \in C^{\infty}_c(\mR^2)$, the Radon transform of $f$ is 
\[
Rf(s,v) = \int_{-\infty}^{\infty} f(s v + t v^{\perp}) \,dt, \qquad s \in \mR, \ v \in S^1,
\]
where $v^{\perp}$ is the rotation of $v$ by $90^{\circ}$ counterclockwise. The Fourier transform of $Rf$ in the $s$ variable, denoted by $(Rf)\etilde(\,\cdot\,,v)$, satisfies the \emph{Fourier slice theorem} 
\[
(Rf)\etilde(\sigma, v) = (2\pi)^{1/2} \hat{f}(\sigma v), \qquad \sigma \in \mR, \ v \in S^1.
\]
Using the Plancherel theorem and polar coordinates, we obtain that 
\begin{align*}
\norm{f}_{L^2(\mR^2)}^2 &= \norm{\hat{f}}_{L^2(\mR^2)}^2 = \int_0^{\infty} \int_{S^1} \abs{\hat{f}(\sigma v)}^2 \sigma \,dv \,d\sigma \\
 &= \frac{1}{2} \int_{-\infty}^{\infty} \int_{S^1} \abs{\hat{f}(\sigma v)}^2 \abs{\sigma} \,dv \,d\sigma \\
 &= \frac{1}{4 \pi} \int_{-\infty}^{\infty} \int_{S^{1}}\abs{(Rf)\etilde(\sigma, v)}^2 \abs{\sigma} \,dv \,d\sigma.
\end{align*}
In particular, this implies the stability estimate 
\begin{equation}
\norm{f}_{L^2(\mR^2)} \leq \frac{1}{(4\pi)^{1/2}} \norm{Rf}_{H^{1/2}_T(\mR \times S^1)}
\label{eq:radon}
\end{equation}
with the mixed Sobolev norm $\norm{h}_{H^{1/2}_T(\mR \times S^1)} = \norm{(1+\sigma^2)^{1/4} \tilde{h}(\sigma,v)}_{L^2(\mR \times S^1)}$.
\end{ExampleNoNumber}

The main question we address in the present paper is the existence of a stability estimate analogous to \eqref{eq:radon} but in a geometric setting, namely, when $\mR^2$ and the lines in the plane are replaced
by a Riemannian manifold and its geodesics. There are two features we wish to preserve from \eqref{eq:radon}: one is its $L^2\to H^{1/2}$ nature and the other is that the $H^{1/2}_{T}$ only incorporates ``half of the derivatives" of the target space (space of geodesics).

Let us first be more precise about the geometric setting. The geodesic X-ray transform acts on functions defined on the unit sphere bundle of a compact oriented $d$-dimensional Riemannian manifold $(M,g)$ with smooth boundary $\pl M$ ($d\geq 2$).
Let $SM$ denote the unit sphere bundle on $M$, i.e.
$$SM:=\{(x,v)\in TM : |v|_g=1\}.$$
We define the volume form on $SM$ by $d\Sigma^{2d-1}(x,v)=dV^d(x)\wedge dS_x(v)$, where $dV^d$ is the volume form on $M$ and $dS_x$ is the volume form on the fibre $S_xM$.
The boundary of $SM$ is $\pl SM:=\{(x,v)\in SM : x\in \pl M\}$. On $\pl SM$ the natural volume form is $d\Sigma^{2d-2}(x,v)=dV^{d-1}(x)\wedge dS_x(v)$, where $dV^{d-1}$ is the volume form on $\pl M$. We distinguish two subsets of $\pl SM$ (incoming and outgoing directions)
$$\pl_{\pm}SM:=\{(x,v)\in \pl SM : \pm\langle v,\nu(x)\rangle_g\leq 0\},$$
where $\nu(x)$ is the outward unit normal vector on $\pl M$ at $x$. It is easy to see that
$$\pl_+SM\cap\pl_-SM=S(\pl M).$$
Given $(x,v)\in SM$, we denote by $\gamma_{x,v}$ the unique geodesic with $\gamma_{x,v}(0)=x$ and $ \dot{\gamma}_{x,v}(0)=v$ and let $\tau(x,v)$ be the first time when the geodesic $\gamma_{x,v}$  exits $M$. 

We say that $(M,g)$ is {\it non-trapping} if $\tau(x,v)<\infty$ for all $(x,v)\in SM$. In this case the space of geodesics is naturally parametrized by $\partial_{+}SM$ (fan-beam geometry).

\begin{Definition}
The \emph{geodesic X-ray transform} of a function $F \in C^{\infty}(SM)$ is the function 
\begin{equation*}
IF(x,v):=\int\limits_{0}^{\tau(x,v)}F(\gamma_{x,v}(t),\dot{\gamma}_{x,v}(t)) \,dt,\quad
(x,v)\in \partial_{+} SM.
\end{equation*}
\end{Definition}

If the manifold $(M,g)$ is non-trapping and has strictly convex boundary, then $\tau\in C^{\infty}(\partial_{+}SM)$ and as a consequence $I:C^{\infty}(SM)\rightarrow C^{\infty}(\partial_{+} SM)$. Moreover, $I$ extends as a bounded operator $I:H^{k}(SM)\to H^{k}(\partial_{+}SM)$ for all $k\geq 0$ \cite[Theorem 4.2.1]{Sh}, where the Sobolev spaces are defined using the $L^2$-inner  products arising from the volume forms introduced above.

We shall consider $I$ acting on special functions $F\in C^{\infty}(SM)$ induced by symmetric tensor fields.
We denote by $C^{\infty}(S^m(T^*M))$ the space of smooth covariant symmetric tensor fields of rank $m$ on $M$
with $L^2$ inner product:
$$(u,w):=\int_M u_{i_1\cdots i_m}w^{i_1\cdots i_m} \,dV^d,$$
where $w^{i_1\cdots i_m}=g^{i_1j_1}\cdots g^{i_mj_m} w_{j_1\cdots j_m}$. There is a natural 
map 
$$\ell_{m}: C^{\infty}(S^{m}(T^*M))\to C^{\infty}(SM)$$ given by $\ell_{m}(f)(x,v):=f_{x}(v,\dots,v)$.
We can now define the geodesic ray transform acting on symmetric $m$-tensors simply by
setting $I_{m}:=I\circ \ell_{m}$.
Let $d^s=\sigma\nabla$ be the symmetric inner differentiation, where $\nabla$ is the Levi-Civita connection associated with $g$, and $\sigma$ denotes symmetrization. 
It is easy to check that if $f=d^s p$ for some $p\in C^{\infty}(S^{m-1}(T^*M))$ with $p|_{\pl M}=0$, then $I_{m}f=0$. The tensor tomography problem asks the following question: are such tensors the only obstructions for $I_{m}$ to be injective? If this is the case, then we say $I_m$ is {\it solenoidal injective or s-injective} for short. 
This terminology is explained by the following well known decomposition (cf.\ \cite{Sh}).
Given $f\in H^k(S^m(T^*M)),\, k\geq 0$, there exist uniquely determined $f_s\in H^k(S^m(T^*M))$ and $p\in H^{k+1}(S^{m-1}(T^*M))$, such that 
$$f=f_s+d^sp,\quad \delta^s f_s=0,\quad p|_{\pl M}=0,$$
where $\delta^s$ is the divergence. We call $f_s$ and $d^s p$ the {\it solenoidal} part and {\it potential} part of $f$ respectively.

There is one important instance in which the tensor tomography problem is solved for tensors of any order $m$ and in any dimension $d$. This is when we assume in addition that the sectional curvature of $M$ is non-positive.
Moreover, in this case a stability estimate is available as follows:

\begin{Theorem}{{\rm (}\cite{PS} and \cite[Theorem 4.3.3]{Sh}{\rm )}} \label{thm:Sh}
Let $(M,g)$ be a simply connected compact manifold with strictly convex boundary and
non-positive sectional curvature. Given $m\geq 0$ there is a constant $C>0$ such that for any $f\in H^{1}(S^{m}(T^*M))$
\[\norm{f_{s}}^2_{L^{2}}\leq C(\norm{I_m f}^2_{H^{1}(\partial_{+}SM)}+m\norm{f}_{H^{1}}\norm{I_m f}_{L^{2}}).\]
\end{Theorem}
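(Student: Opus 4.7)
The plan is to apply the Pestov energy identity with its boundary term retained to the primitive of $\ell_m(f_s)$ along the geodesic flow, and use non-positive sectional curvature to discard the curvature contribution via its favorable sign. Since $I_m(d^sp)=0$ whenever $p|_{\pl M}=0$, we have $I_mf=I_mf_s$, and because the projection $f\mapsto f_s$ is bounded on $H^1$, it suffices to prove the inequality with $f$ replaced throughout by $f_s$; the stated form with $\|f\|_{H^1}$ then follows. Setting $F:=\ell_m(f_s)\in C^{\infty}(SM)$, define the primitive $u\in H^{1}(SM)$ as the unique solution of
\[
Xu=-F,\qquad u\big|_{\pl_-SM}=0,
\]
where $X$ is the geodesic vector field. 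Then $u|_{\pl_+SM}=I_mf$, and a fiberwise integration yields $\|Xu\|^2_{L^2(SM)}=c_{d,m}\|f_s\|^2_{L^2(M)}$, so it is enough to control $\|Xu\|_{L^2(SM)}$ in terms of boundary data.

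Next, apply the Pestov identity with boundary contributions, which schematically reads
\[
\|\vd Xu\|^2 \;=\; \|X\vd u\|^2+(d-1)\|Xu\|^2-\langle R\vd u,\vd u\rangle_{L^2(SM)}+\mathcal{B}(u),
\]
where $\vd$ is the vertical gradient, $R$ is the Jacobi curvature operator, and $\mathcal{B}(u)$ is a quadratic boundary integral over $\pl SM$. Under the standing hypothesis of non-positive sectional curvature, $-\langle R\vd u,\vd u\rangle\geq 0$ pointwise; rearranging the identity and dropping this non-negative term from the right-hand side yields
\[
(d-1)\|Xu\|^2 \;\leq\; \|\vd Xu\|^2-\|X\vd u\|^2+|\mathcal{B}(u)|.
\]
Because $u|_{\pl_-SM}=0$ and $\vd$ is tangential to the fibers of $\pl SM$, the boundary term $\mathcal{B}(u)$ involves only tangential derivatives of $u|_{\pl_+SM}=I_mf$ and is therefore controlled by $C\|I_mf\|^2_{H^1(\pl_+SM)}$.

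It remains to estimate $\|\vd Xu\|^2-\|X\vd u\|^2=\|\vd F\|^2-\|X\vd u\|^2$. For $m=0$ this vanishes and the theorem follows at once. For $m\geq 1$, $\vd F$ is a rank-$(m-1)$ object proportional to $m$; using the commutator $[X,\vd]=-\hd$, the solenoidal identity $\delta^sf_s=0$, and one further integration by parts on $SM$, this difference is rewritten as $c_m\|Xu\|^2_{L^2(SM)}$, which is absorbed on the left for appropriate constants, plus a boundary contribution dominated via Cauchy--Schwarz by $Cm\|f_s\|_{H^1}\|I_mf\|_{L^2}$. Combining everything yields the claimed estimate. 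The main technical obstacle is precisely this last step: carefully tracking the tensor-specific commutator computations and interior-to-boundary integrations by parts, so that only tangential derivatives of $I_mf$ appear in the boundary integrals and the tensor corrections assemble into the cross term $m\|f\|_{H^1}\|I_mf\|_{L^2}$ rather than generating genuinely unbounded quantities on the right.
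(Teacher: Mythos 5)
You are reviewing a statement the paper itself does not prove: Theorem \ref{thm:Sh} is quoted from \cite{PS} and \cite[Theorem 4.3.3]{Sh}, and the paper only develops the machinery (Proposition \ref{prop_pestov_boundary}) used later for the sharper Theorem \ref{thm:main_final}. Your outline does follow the classical route of those references: take the primitive $u$ of the transport equation, apply the Pestov identity with boundary term, use non-positive curvature to discard $(R\vd u,\vd u)$, and use $\delta^s f_s=0$ in an integration by parts to generate the cross term. That route can be completed, but the decisive step is exactly where your sketch is wrong as stated.

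Writing $F=\ell_m f_s=-Xu$, the commutator $[X,\vd]=-\hd$ gives $X\vd u=-\vd F-\hd u$, hence
\begin{equation*}
\norm{\vd Xu}^2-\norm{X\vd u}^2=-2(\vd F,\hd u)-\norm{\hd u}^2 ,
\end{equation*}
and the horizontal divergence theorem on $SM$ together with $\delta^s f_s=0$ yields
\begin{equation*}
(\vd F,\hd u)=m\norm{F}^2+m\int_{\partial SM}u\,f_{ij_2\dots j_m}\nu^i v^{j_2}\cdots v^{j_m}\,d\Sigma^{2d-2}.
\end{equation*}
So the interior tensorial contribution to $\norm{\vd Xu}^2-\norm{X\vd u}^2$ is $-2m\norm{F}^2-\norm{\hd u}^2\le 0$: it has a \emph{favorable} sign, nothing needs to be absorbed, and the boundary term above (estimated via the trace theorem and $u|_{\partial SM}=E_0(I_mf)$) is what produces $Cm\norm{f}_{H^1}\norm{I_mf}_{L^2}$. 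Your proposed mechanism --- that the difference equals $c_m\norm{Xu}^2$ and is ``absorbed on the left for appropriate constants'' --- cannot work: a term $+c_m\norm{Xu}^2$ with $c_m\sim m$ could never be dominated by $(d-1)\norm{Xu}^2$ once $m\gtrsim d$, so the validity of the theorem for every $m$ hinges precisely on the sign you never verify; as written this is a genuine gap, even though the surrounding strategy is the correct classical one. Smaller points: for $m=0$ the difference is $-\norm{X\vd u}^2\le 0$, not zero; bounding the Pestov boundary term by $\norm{I_mf}^2_{H^1(\partial_+SM)}$ requires knowing that $T$ can be rewritten using only derivatives tangent to $\partial SM$ (Lemma \ref{lma:Q}, formula \eqref{eq:Tfull}; the naive $Tu=\mu\hd u-Xu\,\vd\mu$ contains a normal horizontal derivative not determined by $I_mf$) and that $E_0(I_mf)\in H^1(\partial SM)$ (Lemma \ref{lemma_uf_regularity}); since $u^f$ is only $H^1(SM)$ a density/regularity argument is needed to run the identity; and $\norm{Xu}^2_{L^2(SM)}$ is merely comparable to $\norm{f_s}^2_{L^2(M)}$, not equal to a fixed multiple of it, which is all one needs.
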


(We note that a manifold as in the theorem is necessarily non-trapping.)
There are two notorious differences between the stability estimate above and that in \eqref{eq:radon}.
Firstly, the stability estimate in Theorem \ref{thm:Sh} has in the right hand side the term $\norm{f}_{H^{1}}\norm{I_{m}f}_{L^{2}}$ when $m\neq 0$. Secondly, it is not sharp in the sense that it is $L^2\to H^1$.
In \cite{BS18} Boman and Sharafutdinov resolved these issues for strictly convex domains in Euclidean space and asked whether the same was true for the more general setting of non-positively curved Riemannian manifolds. This paper provides a positive answer to these questions.
Moreover, the $1/2$-Sobolev space on the target space of $I_m$ is naturally suggested by the geometry
and the most relevant $L^2$-energy identity for the problem: {\it the Pestov identity}.
The Pestov identity with boundary term in the way that we shall use it here was derived for instance in
\cite[Lemma 8]{IP18}. It contains a boundary term given by
\begin{equation}
(Tu,\vd u)_{L^{2}(\partial SM)}
\label{eq:bt}
\end{equation}
where $u\in C^{\infty}(\partial SM)$, $\vd$ is the vertical gradient, and $T$ is a {\it tangential operator} defined by 
\[Tu:=\langle \nu(x),v\rangle\ehd u-\nu Xu,\]
where $X$ is the geodesic vector field and $\ehd$ the full horizontal gradient (we refer to Sections \ref{section:prelim} and \ref{section_pestov_general_connection} for the precise definitions). The operator $T$ acts on $\partial SM$ and it only involves horizontal derivatives. This suggests that only horizontal derivatives of $I_m f$ on $\partial SM$ should appear in the stability estimate.

We can define the {\it tangential} (or \emph{horizontal}) $H^{1}(\partial SM)$-norm by setting
\[\norm{u}^2_{H^{1}_{T}(\p SM)}:=\norm{u}^{2}_{L^{2}(\p SM)}+\norm{\ehdp u}^{2}_{L^{2}(\p SM)}\]
where $\ehdp u$ contains the tangential derivatives in $\ehd u$ along $\p M$. For example, if $M$ is a ball in $\mR^p$ with Euclidean metric, then $\p SM = \p M \times S^{d-1}$ and 
\[
\norm{u}_{H^1_T(\p SM)}^2 = \int_{\p M} \int_{S^{d-1}} (\abs{u(x,v)}^2 + \abs{\nabla_x u(x,v)}^2) \,dS(v) \,dV(x)
\]
where $\nabla_x$ is the gradient on $\p M$. The space $H^{1}_{T}(\p_+ SM)$ is defined by restriction, and $H^{1/2}_{T}(\partial _{+}SM)$ is defined by complex interpolation between $L^2(\partial_{+}SM)$ and $H^{1}_{T}(\partial_{+}SM)$.

With this definition we may now state our main result:

\begin{Theorem} \label{thm:main_final}
Let $(M,g)$ be a simply connected compact manifold with strictly convex boundary and
non-positive sectional curvature. Given $m\geq 0$ there is a constant $C>0$ such that for any $f\in H^{1}(S^{m}(T^*M))$
\[\norm{f_{s}}_{L^{2}}\leq C\norm{I_m f}_{H_{T}^{1/2}(\partial_{+}SM)}.\]
\end{Theorem}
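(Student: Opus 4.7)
The plan is to work with the solution $u \in H^1(SM)$ of the transport equation $Xu = -\ell_m(f)$ with $u|_{\partial_- SM} = 0$, so that $u|_{\partial_+ SM} = I_m f =: h$. Since $I_m$ annihilates potential tensors, one reduces to the solenoidal case $f = f_s$; then $\|Xu\|^2_{L^2(SM)}$ is a fixed fiber-integration multiple of $\|f\|^2_{L^2(M)}$. Starting from the Pestov identity with boundary term of \cite[Lemma 8]{IP18}, discarding the non-negative quantity $-(R\vd u,\vd u)$ afforded by non-positive curvature, and using the commutator $[X,\vd]=-\hd$ together with $\delta^s f = 0$ in the same way as in the proof of Theorem \ref{thm:Sh} from \cite{PS}, one arrives at an interior/boundary inequality
\[
\|f\|^2_{L^2(M)} \;\leq\; C\,|(Tu,\vd u)_{L^2(\partial SM)}|.
\]
Because $u$ vanishes on $\partial_- SM$, this boundary pairing is supported on $\partial_+ SM$ and depends only on $h$.

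To extract the sharp $H^{1/2}_T$-bound I would decompose $u|_{\partial SM}$ in fiberwise spherical harmonics $\sum_{k\geq 0} u_k$, with $h = \sum h_k$ of vertical degrees $k$, and localize the Pestov identity in vertical frequency. The vertical gradient satisfies $\|\vd u_k\|_{L^2(\partial SM)} \sim \sqrt{k(k+d-2)}\,\|h_k\|_{L^2(\partial SM)}$, while the tangential operator $T$ uses only derivatives along $\partial M$ and obeys $\|Tu_k\|_{L^2(\partial SM)} \lesssim \|h_k\|_{H^1_T(\partial SM)}$; since $X$ shifts vertical degree by $\pm 1$, the pairing $(Tu,\vd u)$ decomposes into off-diagonal contributions between modes of neighbouring degrees. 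A naive mode-wise Cauchy--Schwarz gives only the $H^1_T$-bound of Theorem \ref{thm:Sh}; the improvement to $H^{1/2}_T$ comes from applying Pestov to each projection $u_k$ separately, where the interior side carries a $k$-weight (from $\|X\vd u_k\|^2 \sim k^2\|\cdot\|^2$) that compensates the $\sqrt{k(k+d-2)}$ factor coming from $\vd u_k$. Combined with the interpolation $\|h_k\|^2_{H^{1/2}_T} \leq \|h_k\|_{L^2}\|h_k\|_{H^1_T}$ and summing in $k$, this yields
\[
|(Tu,\vd u)_{L^2(\partial SM)}| \;\leq\; C\,\sum_{k\geq 0}\|h_k\|^2_{H^{1/2}_T(\partial_+ SM)} \;=\; C\,\|h\|^2_{H^{1/2}_T(\partial_+ SM)}.
\]

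The principal obstacle is the frequency bookkeeping. Because $X$ is not diagonal in the vertical-harmonic basis, the Pestov identity applied to $u_k$ is coupled to $u_{k\pm 1}$, generating cross terms that must be combined with the correct weights. Showing that these off-diagonal contributions can be reorganised into a diagonal sum whose summation matches $\|h\|^2_{H^{1/2}_T}$, and that the interior compensating $k$-factor genuinely emerges at each frequency, is the technical heart of the argument. Once this is in place, the interior Pestov bound on $\|f\|^2_{L^2(M)}$ combines with the sharp boundary estimate to give $\|f_s\|_{L^2(M)} \leq C\|I_m f\|_{H^{1/2}_T(\partial_+ SM)}$.
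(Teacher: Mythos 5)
Your outline follows the paper's general strategy (Pestov identity with boundary term, non-positive curvature, vertical frequency localization), but it leaves open, and in one place misstates, exactly the points that carry the proof. First, the frequency structure of the boundary term: you write that, since $X$ shifts vertical degree by $\pm 1$, the pairing $(Tu,\vd u)_{\partial SM}$ "decomposes into off-diagonal contributions between modes of neighbouring degrees" and you defer the resulting bookkeeping as "the principal obstacle". In fact the key point is the opposite and it is not bookkeeping: the quadratic form $P(u,w)=(Tu,\vd w)_{\partial SM}$ is \emph{diagonal} in vertical frequency, $P(u_m,u_l)=0$ for $m\neq l$, equivalently $\vdiv T:\Omega_m\to\Omega_m$. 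Proving this requires showing that the solenoidal ($\vdiv$-free) part $Z(u)$ of $\hd u$ localizes, which reduces to a statement about $A\,\vd u_m$ for $A\in\Omega_1$ and is established via the Hodge decomposition on $S^{d-1}$ and an eigenvalue computation for $\Delta(dA\wedge du)$ (a Weitzenb\"ock-type identity). Without this lemma there is no clean per-frequency Pestov identity, and your heuristic that an interior $k$-weight "compensates" the $\sqrt{\lambda_k}$ from $\vd u_k$ cannot be run; moreover your mode-wise interpolation step ($\norm{h_k}_{H^{1/2}_T}^2\leq\norm{h_k}_{L^2}\norm{h_k}_{H^1_T}$ and $\sum_k\norm{h_k}_{H^{1/2}_T}^2=\norm{h}_{H^{1/2}_T}^2$) is unjustified for an interpolation-defined norm; the paper instead bounds the weighted boundary sum as $-P(u,Bu)$ for a Fourier multiplier $B$ with $\lambda_{m+l}b_{m,l}^2\leq 1$, and proves an operator estimate for $B\vdiv T$ by interpolating $L^2\to H^{-1}_T$ and $H^1_T\to L^2$ bounds, using the self-adjointness of $\vdiv T$ (which itself follows from the Pestov identity and polarization).

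Second, the reduction "since $I_m$ annihilates potential tensors, reduce to $f=f_s$, so $\norm{Xu}^2$ is a multiple of $\norm{f}^2$" does not remove the real difficulty, and your claimed inequality $\norm{f}_{L^2(M)}^2\leq C\,\abs{(Tu,\vd u)_{\partial SM}}$ is not what the Pestov argument yields. Even for solenoidal $f$, what the frequency-shifted iteration controls is $\norm{X_-u_m}^2+\norm{X_-u_{m+1}}^2=\norm{f-Xr}^2$ with $r=u_0+\cdots+u_{m-1}$, i.e.\ $\norm{f-d^sq}$ with a potential $q$ that does \emph{not} vanish on $\partial M$ (this is precisely why the classical $H^1$ estimate carries the extra term $m\norm{f}_{H^1}\norm{I_mf}_{L^2}$). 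Recovering $\norm{f_s}$ requires integrating by parts, absorbing the boundary pairing $(\iota_\nu f_s,q)_{\partial M}$ via the duality bound $\norm{\iota_\nu f_s}_{H^{-1/2}(\partial M)}\lesssim\norm{f_s}_{L^2}$ (valid because $\delta^sf_s=0$), and then controlling $\norm{q}_{H^{1/2}(\partial M)}$ by $\norm{u^f}_{H^{1/2}_T(\partial SM)}$; the latter needs both the comparison of tensor norms with $H^{1/2}_T$ norms of $\ell_{m-1}q$ and the fact that the $H^1_T(\partial SM)$ norm itself localizes in vertical frequency, another nontrivial lemma. Finally, passing from $\norm{u^f}_{H^{1/2}_T(\partial SM)}$ to $\norm{I_mf}_{H^{1/2}_T(\partial_+SM)}$ uses that $u^f|_{\partial SM}$ is the extension by zero of $If$, that $If\in H^1_0(\partial_+SM)$, and that extension by zero is bounded on the interpolated spaces; this step is also missing. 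As it stands, the proposal identifies the right framework but omits the two results that constitute the actual proof: the diagonal localization of the boundary term and the boundary treatment of the low-mode potential part.
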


The constant $C$ can be estimated in terms of $m$ and $(M,g)$. In fact, for the related stability result for the transport equation in Theorem \ref{thm_stability_transport}, one can take $C = 1$.

Most of work in the proof of Theorem \ref{thm:main_final} lies in the upgrade from the $H^{1}(\partial_{+}SM)$-norm in
Theorem \ref{thm:Sh} to the $H^{1/2}_{T}(\partial_{+}SM)$-norm. The upgrade is possible thanks to the localization in frequency of the Pestov identity first noted in full generality in \cite{PS18} (in two dimensions this was proved in \cite{PSU_hd}). However, in \cite{PS18} we did not consider the boundary term.
It turns out, quite remarkably, that the boundary term \eqref{eq:bt} also localizes in frequency. This allows us to change the norm for $I_m f$ from $H^1$ to $H^{1/2}_T$, thus producing the upgrade. We also mention that for $d=2$ the proof would simplify substantially because spherical harmonics decompositions and the $T$ operator are simpler; the two-dimensional proof will be given in \cite{PSU_book}.

\subsection{Related results and alternative approaches} There are many earlier results on stability estimates for $I_m$, using different techniques. One approach is to consider the normal operator $I^*_{m}I_{m}$ where the adjoint $I^*_{m}$ is computed using a natural $L^2_{\mu}$-inner product on $\partial_{+}SM$ suggested by the Santal\'o formula. When $M$ is free of conjugate points, it turns out that $I^*_{m}I_{m}$ is an (elliptic) $\Psi$DO of order $-1$ on a slightly larger open manifold engulfing $M$. This approach has produced stability estimates for the normal operator, cf.\ \cite{SU04}, and has proved to be of fundamental importance in the solution of several geometric inverse problems. One drawback is that one needs to work on the slightly extended manifold, unless one is willing to incorporate modified transmission conditions to account for boundary effects \cite{MNP}.  Another drawback is that the approach does not give estimates for the constants due to a compactness argument. Still, quite recently, a sharp stability estimate has been obtained in \cite{AS19}, by defining a suitable $H^{1/2}$-norm based on this extension or equivalently on a different parametrization of the space of geodesics.  Our approach in Theorem \ref{thm:main_final} deals directly with the boundary and with the space of geodesics in ``fan-beam" geometry as given by $\partial_{+}SM$. In this sense our theorem addresses the open problem stated at the end of the introduction in \cite{AS19}. Also our tangential derivatives are naturally suggested by the geometry of the problem.

The microlocal approach can actually be pushed further, using scattering calculus and a combination of a local theorem with a global strict convexity assumption as in \cite{UhlmannVasy,SUV_tensor}. This is also very powerful, and allows even to consider situations with conjugate points as long as $d\geq 3$. However, the stability estimates thus produced are $L^2\to H^1$.

One drawback of Theorem \ref{thm:main_final} is the curvature assumption. In \cite{AS19} the estimates hold for compact simple manifolds for $m=0,1$ and for $m=2$ when $I_{m}$ is known to be injective, e.g.\ when $d=2$ \cite{Sh07,PSU1}. Another possible improvement would be to replace the assumption $f \in H^1$ by $f \in L^2$ and to prove the two-sided inequality 
\[c \norm{f_{s}}_{L^{2}}\leq \norm{I_m f}_{H_{T}^{1/2}(\partial_{+}SM)} \leq C \norm{f_{s}}_{L^{2}}.
\]
For this, one would like to prove that $I_m$ is bounded from $L^2$ to $H^{1/2}_T$. This is true if $f$ vanishes near $\p M$ since $I_m$ is a Fourier integral operator, but it is not clear how to prove this with uniform bounds when the support of $f$ extends up to $\p M$.

Finally, we mention that quite recently, Monard \cite{M19} has studied very detailed mapping properties of $I_0$ for 2D discs of constant curvature at all Sobolev scales; for these cases, he also obtains a stability estimate with a suitable $H^{1/2}$-norm. Further references to stability estimates for $I_m$ may be found in \cite{AS19}.

It is natural to speculate whether Theorem \ref{thm:main_final} extends to more general situations. For example
one could try to relax the assumption about strict convexity of the boundary as in \cite{GMT} or allow for
some hyperbolic trapping as in \cite{Gu15}. One could also speculate on extensions that include attenuations and sections of a suitable vector bundle. We do not pursue these here.

\subsection*{Acknowledgements} We are very grateful to the referee for several comments that improved the presentation and in particular for suggesting a simplified proof of Lemma \ref{lemma_localization_newproof}. GPP was supported by EPSRC grant EP/R001898/1 and the Leverhulme trust.
MS\ was supported by the Academy of Finland (Finnish Centre of Excellence in Inverse Modelling and Imaging, grant numbers 312121 and 309963) and by the European Research Council under Horizon 2020 (ERC CoG 770924). This material is based upon work supported by the National Science Foundation under Grant No.\ 1440140, while the authors were in residence at MSRI in Berkeley, California, during the semester on Microlocal Analysis in 2019.


\section{Geometric preliminaries} \label{section:prelim}

In this section we collect some geometric preliminaries for subsequent use. \\[-5pt]

\noindent {\bf Unit sphere bundle.}
We start by recalling some standard notions related to the geometry of the unit sphere bundle. We follow the setup and notation of \cite{PSU_hd}; for other approaches and background information see \cite{GK2,Sh,Pa,Kn,DS}.

Let $(M,g)$ be a $d$-dimensional compact Riemannian manifold with or without boundary, having unit sphere bundle $\pi: SM\to M$, and let $X$ be the geodesic vector field. We equip $SM$ with the Sasaki metric. If $\mathcal V$ denotes the vertical subbundle
given by $\mathcal V=\mbox{\rm Ker}\,d\pi$, then there is an orthogonal splitting with respect to the Sasaki metric:
\begin{equation}\label{TSM}
TSM=\re X\oplus {\mathcal H}\oplus {\mathcal V}.
\end{equation}
The subbundle ${\mathcal H}$ is called the horizontal subbundle. Elements in $\mathcal H(x,v)$ and $\mathcal V(x,v)$ are canonically identified with elements in the codimension one subspace $\{v\}^{\perp}\subset T_{x}M$ by the isomorphisms
\[ d\pi_{x,v} : \mc{V}(x,v)\to \{v\}^{\perp} ,  \quad \mc{K}_{x,v}: \mathcal H(x,v)\to \{v\}^{\perp},\]
here $\mc{K}_{x,v}$ is the connection map coming from Levi-Civita connection.
We will use these identifications freely below.  

We shall denote by $\mathcal Z$ the set of smooth functions $Z:SM\to TM$ such that $Z(x,v)\in T_{x}M$ and $\langle Z(x,v),v\rangle=0$ for all $(x,v)\in SM$.
Alternatively we may describe the elements of $\mathcal Z$ is a follows. Consider the pull-back bundle $\pi^*TM$ over $SM$ and let $N$ denote the subbundle of $\pi^*TM$ whose fiber over $(x,v)$
is given by $N_{(x,v)}=\{v\}^{\perp}$. Then $\mathcal Z$ coincides with the smooth sections
of the bundle $N$. Note that $N$ carries a natural scalar product and thus an $L^{2}$-inner product 
(using the Liouville measure on $SM$ for integration).

Given a smooth function $u\in C^{\infty}(SM)$ we can consider its gradient $\nabla u$ with respect to the Sasaki metric. 
Using the splitting above we may write uniquely in the decomposition \eqref{TSM}
\[\nabla u=((Xu)X,\hd u,  \vd u). \]
The derivatives $\hd u\in  \mc{Z}$ and $\vd u\in \mc{Z}$ are called horizontal and vertical derivatives respectively. (This differs from the definitions in \cite{Kn,Sh} since here all objects are defined on $SM$ as opposed to $TM$.)

The geodesic vector $X$ acts on $\mathcal Z$ as follows:
\begin{equation}\label{XonZ}
XZ(x,v):=\frac{DZ(\varphi_{t}(x,v))}{dt}|_{t=0}
\end{equation}
where $D/dt$ is the covariant derivative with respect to Levi-Civita connection and $\varphi_t$ is the geodesic flow.  With respect to the $L^2$-product on $N$, the formal adjoints of $\vd:C^{\infty}(SM)\to\mathcal Z$ and $\hd:C^{\infty}(SM) \to \mathcal Z$ are denoted by $-\vdiv$ and $-\hdiv$ respectively. Note that since $X$ leaves invariant the volume form of the Sasaki metric we have $X^*=-X$ for both actions of $X$ on $C^{\infty}(SM)$ and $\mathcal Z$.

Let $R(x,v):\{v\}^{\perp}\to \{v\}^{\perp}$ be the operator determined by the Riemann curvature tensor by $R(x,v)w=R(w,v)v$, and let $d=\dim M$. \\[-5pt]

\noindent {\bf Spherical harmonics decomposition.}
There is a natural spherical harmonics decomposition with respect to the vertical Laplacian $\Delta = -\vdiv \vd$ (cf.\ \cite[Section 3]{PSU_hd} and \cite{GK2}):
$$
L^2(SM) = \bigoplus_{m=0}^{\infty} H_m(SM),
$$
so that any $f \in L^2(SM)$ has the orthogonal decomposition 
$$
f = \sum_{m=0}^{\infty} f_m.
$$
We write $\Omega_m = H_m(SM) \cap C^{\infty}(SM)$. Then $\Delta u = m(m+d-2)u$ for $u \in \Omega_m$ and we let $\lambda_{m}:=m(m+d-2)$.
\vspace{10pt}

\noindent {\bf Decomposition of $X$.} The geodesic vector field has a special behaviour with respect to the decomposition into fibrewise
spherical harmonics: it maps $\Omega_{m}$ into $\Omega_{m-1}\oplus\Omega_{m+1}$ \cite[Proposition 3.2]{GK2}. Hence on $\Omega_{m}$ we can write
\[X=X_{-}+X_{+}\] 
where $X_{-}:\Omega_{m}\to \Omega_{m-1}$ and $X_{+}:\Omega_{m}\to\Omega_{m+1}$.
By \cite[Proposition 3.7]{GK2} the operator $X_{+}$ is overdetermined elliptic (i.e.\ it has injective principal symbol). We can explain the decomposition $X=X_{-}+X_{+}$ as follows. Fix $x\in M$ and consider local coordinates which are geodesic at $x$ (so\ $\p_{x_j} g_{kl}(x) = 0$ for all $j,k,l$).
Then $Xu(x,v)=v^{i}\frac{\partial u}{\partial x^{i}}$. We now use the following basic fact about spherical harmonics: the product of a spherical harmonic of degree $m$ with a spherical harmonic of degree one decomposes as the sum of spherical harmonics of degree $m-1$ and $m+1$.

\section{Pestov identity with boundary term} \label{section_pestov_general_connection}

We recall  the following commutator formulas from \cite{PSU_hd}:
\begin{equation} \label{basic_commutator_formulas}
\begin{split}
[X,\vd]&=-\hd, \\ 
[X,\hd]&=R\vd, \\ 
\hdiv\vd-\vdiv\hd&=(d-1)X. 
\end{split}
\end{equation}
Taking adjoints gives the following commutator formulas on~$\mathcal Z$:
\begin{equation}
\begin{split}
[X,\vdiv] &= -\hdiv, \\
[X,\hdiv] &= -\vdiv R.
\end{split}
\end{equation}

Using these relations one can establish a Pestov identity with boundary term.
Let $\mu(x,v):=\langle v,\nu(x)\rangle$.  We let $\norm{\cdot}$ and $(\cdot,\cdot)$ denote the $L^2$-norm and $L^{2}$-inner product respectively determined
by the volume form $d\Sigma^{2d-1}$ on $SM$; we let $(\cdot,\cdot)_{\partial SM}$ stand for the $L^{2}$-inner product
on $\partial SM$ determined by $d\Sigma^{2d-2}$.

\begin{Proposition}[Pestov identity with boundary term, cf.\ Lemma 8 in \cite{IP18}] \label{prop_pestov_boundary}
Let $(M,g)$ be a compact manifold with smooth boundary. If $u \in C^{\infty}(SM)$, then
\[
\norm{\vd Xu}^{2}=\norm{X\vd u}^2 - (R \vd u, \vd u) + (d-1)\norm{Xu}^{2}+P(u,u),
\]
where $P$ is the quadratic form defined by
\[P(u,w)=(Tu,\vd w)_{\partial SM},\]
and $Tu:=\mu\hd u-Xu\vd\mu$.
\end{Proposition}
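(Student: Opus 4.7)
The plan is to begin from the commutator $[X,\vd]=-\hd$ in \eqref{basic_commutator_formulas}, which gives $\vd Xu = X\vd u + \hd u$. Expanding the square,
\[
\|\vd X u\|^{2} = \|X\vd u\|^{2} + 2(X\vd u,\hd u) + \|\hd u\|^{2},
\]
so the task reduces to showing that $2(X\vd u,\hd u) + \|\hd u\|^{2}$ equals $(d-1)\|Xu\|^{2} - (R\vd u,\vd u) + P(u,u)$. I produce two expressions for the cross term $(X\vd u,\hd u)$ by integrating by parts in two different ways, then combine them.

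First, I integrate by parts in $X$: since $X$ preserves the Sasaki volume, $(X\varphi,\psi)+(\varphi,X\psi) = (\mu\varphi,\psi)_{\p SM}$ for sections of $\mathcal Z$, and then applying $[X,\hd]=R\vd$ to convert $X\hd u$ into $\hd Xu + R\vd u$ gives
\[
(X\vd u,\hd u) = -(\vd u,\hd Xu) - (R\vd u,\vd u) + (\mu\vd u,\hd u)_{\p SM}.
\]

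Second, I transfer $\hd$ off $Xu$ in $(\vd u,\hd Xu)$ using $\hd^{*}=-\hdiv$, which produces a boundary term $B := \int_{\p SM} Xu\,\langle \nu,\vd u\rangle\,d\Sigma^{2d-2}$. Applying the third commutator $\hdiv\vd = \vdiv\hd + (d-1)X$, then transferring $\vdiv$ back using $\vdiv^{*}=-\vd$ (which carries no boundary contribution because each vertical fibre is a closed sphere), and using $\vd Xu = X\vd u + \hd u$ once more yields
\[
(\vd u,\hd Xu) = (X\vd u,\hd u) + \|\hd u\|^{2} - (d-1)\|Xu\|^{2} + B.
\]
Substituting this into the expression from the first step and solving for $2(X\vd u,\hd u)+\|\hd u\|^{2}$ gives
\[
\|\vd X u\|^{2} - \|X\vd u\|^{2} = (d-1)\|Xu\|^{2} - (R\vd u,\vd u) + (\mu\vd u,\hd u)_{\p SM} - B.
\]

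The final step is to recognise the combined boundary contribution as $P(u,u) = (Tu,\vd u)_{\p SM}$. Symmetry gives $(\mu\vd u,\hd u)_{\p SM}=(\mu\hd u,\vd u)_{\p SM}$, matching the $\mu\hd u$ summand of $Tu$. The key observation for $B$ is that $\mu(x,v)=\langle v,\nu(x)\rangle$ implies $\vd\mu = \nu - \mu v$, so $\langle \nu,Z\rangle = \langle \vd\mu,Z\rangle$ for every $Z\in\{v\}^{\perp}$; applied with $Z=\vd u$ this gives $B = (Xu\,\vd\mu,\vd u)_{\p SM}$. The boundary contribution then collapses to $(\mu\hd u - Xu\,\vd\mu,\vd u)_{\p SM} = (Tu,\vd u)_{\p SM}$, as required. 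The main obstacle is the careful bookkeeping of boundary terms -- noting that $X$ and $\hd$ produce $\p SM$-contributions under integration by parts while $\vd$ does not (the fibres being boundaryless spheres) -- together with the identification of the horizontal divergence theorem's weight $\langle\nu,\cdot\rangle$ with $\langle\vd\mu,\cdot\rangle$ on the fibre-orthogonal subspace, which is what makes the precise form of $Tu$ emerge.
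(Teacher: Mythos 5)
Your argument is correct and is essentially the standard derivation behind the cited Lemma 8 of \cite{IP18}: expand $\vd Xu = X\vd u + \hd u$, compute the cross term twice using the commutators \eqref{basic_commutator_formulas} together with the flow and horizontal Gauss--Ostrogradskii formulas (boundary weights $\mu$ and $\langle\,\cdot\,,\nu\rangle$, none for the vertical integration by parts), and identify the boundary contribution via $\vd\mu=\nu-\mu v$. The only point that deserves the care you give it is precisely the bookkeeping of which integrations by parts produce $\partial SM$-terms, and your identification $\langle\nu,\vd u\rangle=\langle\vd\mu,\vd u\rangle$ on $\{v\}^{\perp}$ is exactly what makes $Tu=\mu\hd u-Xu\,\vd\mu$ appear, so the proof is complete as written.
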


We can express $T$ using the full horizontal derivative $\ehd u=\hd u+(Xu)v$ as
$Tu=\mu\ehd u-\nu Xu$ since $\vd\mu=\nu-\mu v$.  It turns out that $T$ can also be rewritten
in such a way that it acts on functions $u\in C^{\infty}(\partial SM)$. To see this,
consider the operators
\begin{equation}
\label{eq:d-par-def}
\nabla^{\parallel}u:=
\nabla u-\ip{\nabla u}{(\nu,0)}(\nu,0)
\end{equation}
and
\begin{equation}
\label{eq:X-par-def}
X^{\parallel}:=
X-\ip{X}{(\nu,0)}(\nu,0)
=
(v-\vnu\nu,0)
=
(v^{\parallel},0)
\end{equation}
at the boundary. Note that $(\nu,0)$ is the horizontal lift of $\nu$. We also define the horizontal part of $\nabla^{\parallel}$ as 
\[
\ehdp u := d\pi(\nabla^{\parallel} u) = \ehd u - \langle \ehd u, \nu \rangle \nu.
\]
The following simple lemma is proved in \cite[Lemma 14]{IP18}:

\begin{Lemma}
\label{lma:Q}
We have
\begin{equation}
Tu
=\mu \ehdp u -\nu X^{\parallel}u.
\label{eq:Tfull}
\end{equation}
\end{Lemma}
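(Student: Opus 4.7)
The plan is to derive the identity by a direct decomposition of $\ehd u$ and of $X$ into their components tangent and normal to $\partial SM$ inside $SM$. The starting point is the reformulation $Tu = \mu \ehd u - \nu Xu$ that was written out just before the lemma statement. On its face this expression involves derivatives of $u$ transverse to $\partial SM$ (through both $\ehd u$ and $Xu$); the lemma is the assertion that those transverse contributions cancel, so that $T$ is genuinely a tangential operator on $\partial SM$.

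The key geometric observation is that the outward unit normal to $\partial SM \subset SM$ with respect to the Sasaki metric is the horizontal lift of the Riemannian boundary normal $\nu$, that is, the vector $(\nu,0)$ appearing in \eqref{eq:d-par-def}--\eqref{eq:X-par-def}. In the $(\mathbb R X \oplus \mathcal H) \oplus \mathcal V$ splitting we have $\nabla u = (\ehd u, \vd u)$, and since $(\nu,0)$ has vanishing vertical component, its Sasaki pairing with $\nabla u$ collapses to the Riemannian pairing $\langle \ehd u, \nu\rangle$. Taking the horizontal part of the orthogonal decomposition $\nabla u = \nabla^{\parallel} u + \langle \ehd u, \nu\rangle\,(\nu,0)$ then gives $\ehd u = \ehdp u + \langle \ehd u, \nu\rangle\,\nu$. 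Applying the analogous decomposition $X = X^{\parallel} + \mu\,(\nu,0)$ from \eqref{eq:X-par-def} to $u$ yields $Xu = X^{\parallel} u + \mu\,\langle \ehd u, \nu\rangle$.

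Substituting both relations into $Tu = \mu\, \ehd u - (Xu)\,\nu$ makes the two contributions proportional to $\mu\,\langle \ehd u, \nu\rangle\,\nu$ cancel, leaving exactly $\mu\, \ehdp u - \nu\, X^{\parallel} u$, which is the claim. This is really a short piece of bookkeeping and I do not foresee a substantive obstacle; the only subtle point is to keep track of which pairings live in $T_xM$ versus $T_{(x,v)}SM$, and in particular to use that the Sasaki-unit normal to $\partial SM$ is $(\nu,0)$, so that inner products against it collapse cleanly to $\langle\,\cdot\,,\nu\rangle_g$ on the horizontal part.
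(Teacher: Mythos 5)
Your computation is correct and is essentially the intended argument: the paper itself does not reprove this identity but cites \cite[Lemma 14]{IP18}, and your direct verification --- writing $\ehd u=\ehdp u+\langle\ehd u,\nu\rangle\nu$ and $Xu=X^{\parallel}u+\mu\langle\ehd u,\nu\rangle$ and observing that the two terms $\mu\langle\ehd u,\nu\rangle\nu$ cancel in $Tu=\mu\ehd u-\nu Xu$ --- is exactly the bookkeeping that proves it. One small remark: the identity itself follows purely from the definitions \eqref{eq:d-par-def}--\eqref{eq:X-par-def}; the fact that $(\nu,0)$ is the Sasaki unit normal to $\partial SM$ is not needed for the cancellation, but it is what makes $\nabla^{\parallel}$ and $X^{\parallel}$ tangential and hence justifies the conclusion drawn after the lemma that $T$ acts on $C^{\infty}(\partial SM)$.
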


From this form we can clearly see that~$T: C^{\infty}(\partial SM)\to {\mathcal Z}|_{\partial SM}$.

\begin{Remark} In 2D, $Tu=(\mathbb{T}u)iv$, where $\mathbb T$ is the tangential horizontal vector field $(i\nu,0)$ and $i$ is the complex structure of the surface. The vector field $\mathbb{T}$ and the vertical vector field $V$ form a commuting frame for $\partial SM$.
\end{Remark}

We next rewrite the Pestov identity in terms of $X_+$ and $X_-$ as in \cite{PS18}. To do this, we need some notation: for a polynomially bounded sequence $\alpha = (\alpha_l)_{l=0}^{\infty}$ of real numbers, we define a corresponding ''inner product'' 
\[
(u,w)_{\alpha} = \sum_{l=0}^{\infty} \alpha_l (u_l, w_l)_{L^2(SM)}, \qquad u, w \in C^{\infty}(SM).
\]
We also write $\norm{u}_\alpha^2 = \sum_{l=0}^{\infty} \alpha_l \norm{u_l}^2$. (If each $\alpha_l$ is positive one gets an actual inner product and norm, but it is notationally convenient to allow zero or negative $\alpha_l$.)

The Pestov identity can then be written in the following form. Define
\begin{align}
\alpha_l &= \lambda_l \left[ \left(1+\frac{1}{l+d-2} \right)^2 - 1 \right] + (d-1), \label{alphal_formula} \\
\beta_l &= \lambda_l \left[ 1 -  \left(1-\frac{1}{l} \right)^2 \right]  - (d-1). \label{betal_formula}
\end{align}
The next result extends \cite[Proposition 4.4]{PS18} to the case with boundary terms.

\begin{Proposition}[Pestov identity in terms of $X_{\pm}$ with boundary term] \label{prop_pestov_xplus_xminus_boundary}
Let $(M,g)$ be a compact manifold with smooth boundary. If $u \in C^{\infty}(SM)$, then
\[
\norm{X_- u}_{\alpha}^2 - (R \vd u, \vd u) + \norm{Z(u)}^2+P(u,u) = \norm{X_+ u}_{\beta}^2,
\]
where $Z(u)$ is $\vdiv$-free.
\end{Proposition}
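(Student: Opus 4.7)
The plan is to combine the Pestov identity with boundary term (Proposition 3.1) with the purely algebraic fibrewise rearrangement carried out in \cite[Proposition 4.4]{PS18} in the boundaryless case. The crucial observation is that the boundary contribution $P(u,u)$ is already isolated in Proposition 3.1 and plays no role in the fibrewise spherical-harmonic analysis that produces the weighted $X_{\pm}$ form; it can simply be carried through unchanged and placed on the appropriate side of the final identity.

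Concretely, I would start by decomposing $u=\sum_{l\geq 0} u_l$ with $u_l\in\Omega_l$ and using $X=X_++X_-$ together with the eigenvalue identity $\norm{\vd f}^2=\lambda_l\norm{f}^2$ on $\Omega_l$ to expand $\norm{\vd Xu}^2$ and $(d-1)\norm{Xu}^2$ from Proposition 3.1. Each of these aggregates, degree by degree, into diagonal contributions $\norm{X_\pm u_l}^2$ plus cross terms of the form $(X_-u_{l+1},X_+u_{l-1})_{L^2(\Omega_l)}$ coming from the fact that both $X_-u_{l+1}$ and $X_+u_{l-1}$ lie in $\Omega_l$. The remaining interior term $\norm{X\vd u}^2$ is handled by the analogous fibrewise decomposition of $N$-valued sections, using the action of $X$ on $\mathcal Z$ given by \eqref{XonZ} together with the commutators $[X,\vd]=-\hd$ and $[X,\vdiv]=-\hdiv$. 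The cross terms from the LHS are absorbed into the diagonal part of $\norm{X\vd u}^2$, and the orthogonal residual is collected as the $\vdiv$-free term $\norm{Z(u)}^2$.

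Matching coefficients level by level produces the weights
\[
\alpha_l=\lambda_l\Bigl[\Bigl(1+\tfrac{1}{l+d-2}\Bigr)^2-1\Bigr]+(d-1),\qquad \beta_l=\lambda_l\Bigl[1-\Bigl(1-\tfrac{1}{l}\Bigr)^2\Bigr]-(d-1),
\]
which encode the ratios $\lambda_{l\pm 1}/\lambda_l$ that arise when $\vd$ is applied to elements of $\Omega_{l\pm 1}$, combined with the $(d-1)\norm{Xu}^2$ contribution. The main obstacle is the $N$-valued harmonic bookkeeping needed to compute $\norm{X\vd u}^2$ and isolate $Z(u)$; for this I would follow the argument of \cite[Proposition 4.4]{PS18} essentially line by line, since every step there is pointwise on the sphere fibres and is unaffected by the presence of $\pl M$. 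Combining the resulting interior identity with Proposition 3.1 then gives the stated identity with $P(u,u)$ appearing on the left-hand side.
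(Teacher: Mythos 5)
Your proposal is correct and follows essentially the same route as the paper: the paper likewise takes Proposition \ref{prop_pestov_boundary}, carries the boundary form $P(u,u)$ through untouched, and redoes the fibrewise spherical-harmonics rearrangement of \cite[Proposition 4.4]{PS18}, using the decomposition $\hd u = \vd[\cdots] + Z(u)$ and $[X,\vd]=-\hd$ to expand $\norm{X\vd u}^2$, with the cross terms cancelling against those of $\norm{\vd Xu}^2-(d-1)\norm{Xu}^2$ via the identity $\lambda_l\bigl(1-\tfrac{1}{l}\bigr)\bigl(1+\tfrac{1}{l+d-2}\bigr)=\lambda_l-(d-1)$. Your phrase about cross terms being ``absorbed into the diagonal part'' is slightly loose (they cancel between the two sides rather than being absorbed), but since you defer to the PS18 computation line by line, which is valid fibrewise and unaffected by $\p M$, the argument is sound.
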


\begin{proof} Recall from \cite[Lemma 4.4]{PSU_hd} that 
\begin{equation} \label{hdau_decomposition}
\hd u = \vd \left[ \sum_{l=1}^{\infty} \left( \frac{1}{l} X_+ u_{l-1} - \frac{1}{l+d-2} X_- u_{l+1} \right) \right] + Z(u)
\end{equation}
where $Z(u) \in \mathcal Z$ satisfies $\vdiv \,Z(u) = 0$. Thus by \eqref{basic_commutator_formulas} 
\begin{equation}
X \vd u = \vd \sum_{l=1}^{\infty} \left[ \left(1-\frac{1}{l} \right) X_+ u_{l-1} + \left(1+\frac{1}{l+d-2} \right) X_- u_{l+1} \right] - Z(u).
\label{eq:XV}
\end{equation}
This gives 
{\scriptsize 
\begin{align*}
 &\norm{X\vd u}^{2}  \\
 & = \sum_{l=1}^{\infty} \lambda_l \left( \left(1-\frac{1}{l} \right) X_+ u_{l-1} + \left(1+\frac{1}{l+d-2} \right) X_- u_{l+1},  \left(1-\frac{1}{l} \right) X_+ u_{l-1} + \left(1+\frac{1}{l+d-2} \right) X_- u_{l+1} \right) \\
 &\qquad + \norm{Z(u)}^{2} \\
 &= \sum_{l=1}^{\infty} \lambda_l \left[�\left(1-\frac{1}{l} \right)^2 \norm{X_{+}u_{l-1}}^{2} + \left(1+\frac{1}{l+d-2} \right)^2 \norm{X_- u_{l+1}}^2 \right] \\
 & \qquad + \sum_{l=1}^{\infty} \lambda_l \left(1-\frac{1}{l} \right) \left(1+\frac{1}{l+d-2} \right) \left[ (X_+ u_{l-1}, X_- u_{l+1}) + (X_- u_{l+1}, X_+ u_{l-1}) \right] + \norm{Z(u)}^{2}.
\end{align*}
}
On the other hand, one has 
\begin{align*}
 &\norm{\vd X u}^2 - (d-1) \norm{X u}^{2} \\
 &= -(d-1) \norm{X_- u_1}^{2} + \sum_{l=1}^{\infty} (\lambda_l - (d-1)) (X_+ u_{l-1} + X_- u_{l+1}, X_+ u_{l-1} + X_- u_{l+1}) \\
 &= -(d-1) \norm{X_- u_1}^{2} + \sum_{l=1}^{\infty} (\lambda_l - (d-1)) \left[ \norm{X_+ u_{l-1}}^{2} + \norm{X_- u_{l+1}}^{2} \right] \\
 &\qquad + \sum_{l=1}^{\infty} (\lambda_l - (d-1)) \left[ (X_+ u_{l-1}, X_- u_{l+1}) + (X_- u_{l+1}, X_+ u_{l-1}) \right].
\end{align*}
Somewhat miraculously, we observe that 
\[
\lambda_l \left(1-\frac{1}{l} \right) \left(1+\frac{1}{l+d-2} \right) = \lambda_l - (d-1).
\]
This means that the two sums above involving $\left[ (X_+ u_{l-1}, X_- u_{l+1}) + (X_- u_{l+1}, X_+ u_{l-1}) \right]$ terms are equal. The Pestov identity from Proposition \ref{prop_pestov_boundary} now yields 

\[\sum_{l=0}^{\infty} \alpha_l \norm{X_- u_{l+1}}^{2} - (R \vd u, \vd u) +\norm{Z(u)}^{2}+P(u,u)= \sum_{l=1}^{\infty} \beta_l \norm{X_+ u_{l-1}}^{2}\]
where $\alpha_l$, $\beta_l$ are as in \eqref{alphal_formula}--\eqref{betal_formula}. The result follows.
\end{proof}

Later on we shall need the following useful property.

\begin{Lemma}[Adjoint of $T$] The formal adjoint of $T:C^{\infty}(\partial SM)\to {\mathcal Z}|_{\partial SM}$
satisfies
\[\vdiv T=-T^*\vd\]
and the operator $\vdiv T$ is self-adjoint in $L^{2}(\partial SM,d\Sigma^{2d-2})$.

\label{lemma:Qstar}
\end{Lemma}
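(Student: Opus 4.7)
My plan is to reduce both claims of the lemma to a single symmetry property and then prove it via a direct calculation of $\vdiv T$. The reduction is immediate: since $\vd$ and $\vdiv$ are tangential to $\partial SM$ and the manifold $\partial SM$ is closed (as $\partial M$ has no boundary), the vertical integration by parts
\[
(\vd u,Z)_{\partial SM} = -(u,\vdiv Z)_{\partial SM}
\]
holds with no boundary contribution. Taking $Z = Tw$ and using the definition of $T^*$ gives $(\vdiv T)^* u = -T^*\vd u$ unconditionally. Hence $\vdiv T = -T^*\vd$ is equivalent to $\vdiv T$ being self-adjoint, so both statements reduce to proving the single symmetry
\[
(Tu,\vd w)_{\partial SM} = (\vd u,Tw)_{\partial SM}, \qquad u,w\in C^\infty(\partial SM).
\]

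To prove the symmetry, I compute $\vdiv Tu$ for $u\in C^\infty(SM)$ starting from $Tu = \mu\,\hd u - (Xu)\vd\mu$, noting that both summands lie in $\mathcal Z$. Applying $\vdiv$ via the Leibniz rule, I simplify using two basic identities: (i) $\vdiv\vd\mu = -\Delta\mu = -(d-1)\mu$, since $\mu(x,v) = \langle v,\nu(x)\rangle$ is a first-degree spherical harmonic on each fiber; and (ii) the commutator relations \eqref{basic_commutator_formulas}, which yield $\vdiv\hd u = \hdiv\vd u - (d-1)Xu$ and $\vd Xu = X\vd u + \hd u$. After substitution the $(d-1)\mu Xu$ contributions cancel in pairs, leaving the compact formula
\[
\vdiv Tu = \mu\,\hdiv\vd u - \langle \vd\mu,\,X\vd u\rangle.
\]

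To finish, I verify that this operator is self-adjoint on $L^2(\partial SM, d\Sigma^{2d-2})$. Working in Fermi coordinates at a boundary point $x_0$, choose an orthonormal tangential frame $\{e_1,\ldots,e_{d-1}\}$ for $T\partial M$. A direct computation shows that $\vdiv T = \sum_{a=1}^{d-1} L_a X_a^{\parallel}$ locally, where $X_a^{\parallel}$ is the horizontal lift of $e_a$ (tangent to $\partial SM$) and $L_a$ is the Killing vector field on the fiber $S_{x_0}M$ generating rotation in the $(e_a,\nu)$-plane; in the chosen frame $L_a = \mu\,\partial_{v^a} - v^a\,\partial_{v^d}$. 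Since $L_a^* = -L_a$ (rotations preserve the fiber measure), $(X_a^{\parallel})^* = -X_a^{\parallel}$, and $[L_a,X_a^{\parallel}]=0$ at $x_0$, one concludes $(\vdiv T)^* = \sum_a X_a^{\parallel} L_a = \sum_a L_a X_a^{\parallel} = \vdiv T$. This mirrors the particularly clean two-dimensional case recorded in the remark after Lemma \ref{lma:Q}, where $\vdiv T = V\mathbb T$ for the commuting tangential frame $(\mathbb T,V)$.

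The main obstacle is handling the frame-dependent corrections away from the base point $x_0$: both the divergence of $X_a^{\parallel}$ on $\partial SM$ and the commutator $[L_a,X_a^{\parallel}]$ pick up first-order terms involving the Levi-Civita connection of $\partial M$ and the second fundamental form $\mathrm{II}$ of $\partial M\subset M$. I expect these corrections to cancel upon summation over $a$ thanks to the symmetry $\mathrm{II}(e_a,e_b) = \mathrm{II}(e_b,e_a)$ and the global rotational symmetry of the fiber; verifying this cancellation cleanly, so that the local self-adjointness promotes to a global statement, is the technical heart of the argument.
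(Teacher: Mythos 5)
Your reduction is correct and matches the paper's logic: since the fibers are closed, $(\vd u,Z)_{\p SM}=-(u,\vdiv Z)_{\p SM}$ holds with no boundary term, so $(\vdiv T)^*=-T^*\vd$ is automatic and both claims of Lemma \ref{lemma:Qstar} collapse to the single symmetry $(Tu,\vd w)_{\p SM}=(\vd u,Tw)_{\p SM}$. Your intermediate computation $\vdiv Tu=\mu\,\hdiv\vd u-\langle \vd\mu, X\vd u\rangle$ is also correct (the $(d-1)\mu Xu$ terms do cancel, and $\vd Xu=X\vd u+\hd u$ absorbs the remaining $\langle\vd\mu,\hd u\rangle$ term). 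The problem is that you never actually prove the symmetry. The final step argues self-adjointness from a frame representation $\vdiv T=\sum_a L_a X_a^{\parallel}$ whose ingredients -- skew-adjointness of $X_a^{\parallel}$, vanishing of $[L_a,X_a^{\parallel}]$, and indeed the representation itself -- are only verified \emph{at the single point} $x_0$ in Fermi coordinates. Adjointness is a global, integral statement; a first-order error term that vanishes at one point does not make $\sum_a L_a X_a^{\parallel}$ and $\sum_a X_a^{\parallel}L_a$ equal as operators, nor does it make $X_a^{\parallel}$ skew-adjoint (its divergence on $\p SM$ is nonzero away from $x_0$). You acknowledge this yourself: the connection and second-fundamental-form corrections are deferred with ``I expect these corrections to cancel.'' But that cancellation \emph{is} the lemma; as it stands the heart of the proof is missing, and it is not clear the deferred bookkeeping is any easier than proving the lemma from scratch.

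For comparison, the paper obtains the symmetry with no computation at all: polarizing the Pestov identity of Proposition \ref{prop_pestov_xplus_xminus_boundary} gives
\[
P(u,w)=(X_+u,X_+w)_{\beta}-(X_-u,X_-w)_{\alpha}+(R\vd u,\vd w)-(Z(u),Z(w)),
\]
whose right-hand side is manifestly symmetric in $u,w$ (using the symmetry of $R$), so $P(u,w)=P(w,u)$, i.e.\ $\vdiv T$ is self-adjoint, and then $\vdiv T=(\vdiv T)^*=-T^*\vd$ exactly as in your reduction. Since every $u\in C^{\infty}(\p SM)$ extends to $C^{\infty}(SM)$ and $P$ depends only on boundary values, this suffices. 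If you want to salvage your direct approach, the missing work is precisely the horizontal integration by parts along $\p M$ applied to $\mu\,\hdiv\vd u-\langle\vd\mu,X\vd u\rangle$, where the second fundamental form terms must be tracked and shown to cancel; invoking the polarized Pestov identity avoids this entirely.
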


\begin{proof} We use the Pestov identity with boundary term, to claim first
that the operator $\vdiv T$ is self adjoint. Proposition \ref{prop_pestov_xplus_xminus_boundary} and the polarization identity imply that 
\[
P(u,w) = (X_+ u, X_+ w)_{\beta} - (X_- u, X_- w)_{\alpha} + (R \vd u, \vd w) -(Z(u),Z(w))
\]
and since $R$ is symmetric, it follows that $P(u,w)=P(w,u)$. But $P(u,w)=-(\vdiv T u,w)$ and thus
$\vdiv T$ is self-adjoint. Hence
\[\vdiv T=(\vdiv T)^*=-T^*\vd\]
as desired.
\end{proof}



\section{Frequency localization}

Recall from Section \ref{section:prelim} that any $u \in C^{\infty}(SM)$ admits an $L^2$-orthogonal decomposition 
\[
u = \sum_{l=0}^{\infty} u_l, \qquad u_l \in \Omega_l,
\]
where $\Omega_l$ corresponds to the set of vertical spherical harmonics of degree $l$. Since $X_{\pm}$ maps $\Omega_l$ to $\Omega_{l \pm 1}$, it is immediate that the Pestov identity with boundary term (Proposition \ref{prop_pestov_xplus_xminus_boundary}) reduces to the following identity when applied to functions in $\Omega_l$ (i.e.\ frequency localized Pestov identity).

\begin{Proposition}[Pestov identity on $\Omega_l$ with boundary term] \label{prop_pestov_localized}
Let $(M,g)$ be a compact manifold with smooth boundary, and let $l \geq 0$. One has 
\[
\alpha_{l-1} \norm{X_- u}^2 - (R \vd u, \vd u) + \norm{Z(u)}^2+P(u,u) = \beta_{l+1} \norm{X_+ u}^2, \qquad u \in \Omega_l.
\]
(We define $\alpha_{-1} = 0$.)
\end{Proposition}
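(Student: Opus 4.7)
The plan is to apply Proposition \ref{prop_pestov_xplus_xminus_boundary} directly to a function $u \in \Omega_l$ and observe that the weighted norms $\|\cdot\|_\alpha$ and $\|\cdot\|_\beta$ collapse to single terms. The key structural fact, already noted in Section \ref{section:prelim}, is that $X_{-}: \Omega_l \to \Omega_{l-1}$ and $X_{+}: \Omega_l \to \Omega_{l+1}$. Consequently, if $u \in \Omega_l$ then $X_- u$ is concentrated entirely in vertical frequency $l-1$ (and vanishes when $l=0$), while $X_+ u$ is concentrated entirely in frequency $l+1$.

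Concretely, I would recall the form of the identity that actually appears at the end of the proof of Proposition \ref{prop_pestov_xplus_xminus_boundary}, namely
\[
\sum_{k=0}^{\infty} \alpha_k \|X_- u_{k+1}\|^2 - (R\vd u, \vd u) + \|Z(u)\|^2 + P(u,u) = \sum_{k=1}^{\infty} \beta_k \|X_+ u_{k-1}\|^2.
\]
For $u \in \Omega_l$ we have $u = u_l$ and $u_j = 0$ for $j \neq l$. Thus on the left only the term with $k+1 = l$ survives, contributing $\alpha_{l-1}\|X_- u\|^2$, where the convention $\alpha_{-1}=0$ correctly accounts for the case $l=0$ in which $X_- u = 0$. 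On the right only the term with $k-1 = l$ survives, contributing $\beta_{l+1}\|X_+ u\|^2$.

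The remaining terms, the curvature pairing $(R\vd u, \vd u)$, the $\vdiv$-free remainder $\|Z(u)\|^2$, and the boundary quadratic form $P(u,u) = (Tu, \vd w)_{\partial SM}$ with $w=u$, do not involve any frequency sum and simply carry over unchanged once $u$ is specialized to lie in $\Omega_l$; in particular the defining relation \eqref{hdau_decomposition} for $Z(u)$ reduces to a two-term expression involving only $X_+ u_l$ and $X_- u_l$. Putting these observations together produces the claimed identity.

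There is no real obstacle here, as all the analytic content is already contained in Proposition \ref{prop_pestov_xplus_xminus_boundary}. The only point requiring a small amount of care is bookkeeping of the indexing so that the coefficients come out as $\alpha_{l-1}$ and $\beta_{l+1}$ rather than $\alpha_l$ and $\beta_l$, and the handling of the boundary case $l=0$ via the convention $\alpha_{-1}=0$.
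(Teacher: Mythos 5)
Your proposal is correct and coincides with the paper's own argument: the paper also derives Proposition \ref{prop_pestov_localized} by applying Proposition \ref{prop_pestov_xplus_xminus_boundary} to $u \in \Omega_l$ and noting that, since $X_{\pm}$ maps $\Omega_l$ to $\Omega_{l\pm 1}$, the weighted sums collapse to the single terms $\alpha_{l-1}\norm{X_- u}^2$ and $\beta_{l+1}\norm{X_+ u}^2$, with $\alpha_{-1}=0$ handling $l=0$. Your index bookkeeping is accurate, so nothing further is needed.
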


It was proved in \cite{PS18} (and in \cite[Appendix B]{PSU_hd} when $\dim(M) = 2$) that the frequency localized Pestov identity for all $l$ is equivalent with the standard Pestov identity. The same is true in the boundary case:

\begin{Lemma} \label{lemma_pestov_equivalence}
The Pestov identity with boundary term on $\Omega_l$ is equivalent with the Pestov identity with boundary term in the following sense: for any $u \in C^{\infty}(SM)$, one has 
\begin{multline*}
\sum_{l=0}^{\infty} \left[ \alpha_{l-1} \norm{X_- u_l}^2 - (R \vd u_l, \vd u_l)+ \norm{Z(u_l)}^2+P(u_{l},u_{l}) -\beta_{l+1} \norm{X_+ u_l}^2 \right] \\
 = \norm{X_- u}_{\alpha}^2 - (R \vd u, \vd u) + \norm{Z(u)}^2+P(u,u) - \norm{X_+ u}_{\beta}^2.
\end{multline*}
\end{Lemma}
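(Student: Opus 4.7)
My plan is to split the claimed identity into two kinds of contributions. First, the $X_\pm$ terms on the right-hand side decompose diagonally: since $X_\pm$ is linear and maps $\Omega_l$ into $\Omega_{l\pm 1}$, and the $\Omega_k$ are mutually $L^2$-orthogonal, the definition of $\norm{\cdot}_\alpha$ and $\norm{\cdot}_\beta$ gives
\[
\norm{X_- u}_\alpha^2 = \sum_{l=0}^\infty \alpha_{l-1} \norm{X_- u_l}^2,
\qquad
\norm{X_+ u}_\beta^2 = \sum_{l=0}^\infty \beta_{l+1} \norm{X_+ u_l}^2,
\]
matching the $X_\pm$ contributions on the left term by term. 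So the Lemma reduces to verifying
\[
-(R\vd u, \vd u) + \norm{Z(u)}^2 + P(u,u) = \sum_{l=0}^\infty \bigl[-(R\vd u_l, \vd u_l) + \norm{Z(u_l)}^2 + P(u_l, u_l)\bigr].
\]

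Linearity of $R\vd$, of $Z$ (immediate from \eqref{hdau_decomposition}) and of $T$ expands the left-hand side as the right-hand side plus cross-sum contributions indexed by $l \neq m$. To show these cross contributions cancel, I would use the polarized Pestov identity derived in the proof of Lemma \ref{lemma:Qstar}, namely
\[
P(u, w) = (X_+ u, X_+ w)_\beta - (X_- u, X_- w)_\alpha + (R\vd u, \vd w) - (Z(u), Z(w))
\]
for all $u, w \in C^\infty(SM)$; this polarization is legitimate precisely because $P$ is symmetric (also established in Lemma \ref{lemma:Qstar}). Applying the identity with $u = u_l$ and $w = u_m$ for $l \neq m$, the $X_\pm$ inner products vanish by vertical orthogonality (e.g.\ $(X_+u_l, X_+u_m)_\beta = \beta_{l+1}(X_+u_l, X_+u_m) = 0$ since $X_+u_l \in \Omega_{l+1}$ and $X_+u_m \in \Omega_{m+1}$), leaving
\[
P(u_l, u_m) = (R\vd u_l, \vd u_m) - (Z(u_l), Z(u_m)), \qquad l \neq m.
\]

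Summing this relation over all pairs $l \neq m$ yields
\[
\sum_{l \neq m} P(u_l, u_m) = \sum_{l \neq m} (R\vd u_l, \vd u_m) - \sum_{l \neq m}(Z(u_l), Z(u_m)),
\]
which is exactly the statement that the cross-term contributions of $(R\vd u, \vd u)$, $\norm{Z(u)}^2$, and $P(u,u)$ cancel pairwise. Combining with the clean splitting of the $X_\pm$ terms gives the claim. The only nontrivial input is the polarized form of the Pestov identity, which is already in hand from Lemma \ref{lemma:Qstar}; the remainder is bookkeeping with the spherical harmonic decomposition, so I do not anticipate a real obstacle.
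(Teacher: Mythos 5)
Your argument is correct, but it is genuinely different from the paper's. The paper deduces Lemma \ref{lemma_pestov_equivalence} from the stronger termwise statement of Lemma \ref{lemma_curvature_fourier_localization}, namely that each cross term $(R\vd u_l,\vd u_m)$, $(Z(u_l),Z(u_m))$ and $P(u_l,u_m)$ vanishes separately for $l\neq m$; proving the $Z$-localization is the hard part and requires the vertical Hodge-theoretic Lemma \ref{lemma_localization_newproof} (and the curvature localization is imported from \cite{PS18}). You instead only show that the particular combination $-(R\vd u_l,\vd u_m)+(Z(u_l),Z(u_m))+P(u_l,u_m)$ vanishes for $l\neq m$, and you get this for free from the polarized form of Proposition \ref{prop_pestov_xplus_xminus_boundary} (already available in the proof of Lemma \ref{lemma:Qstar}, which precedes this lemma, so there is no circularity) together with the orthogonality of $X_\pm u_l\in\Omega_{l\pm1}$ and $X_\pm u_m\in\Omega_{m\pm1}$; the splitting of the $\alpha$- and $\beta$-weighted $X_\pm$ norms is handled the same way as in the paper. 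This makes the equivalence essentially tautological and avoids all of the frequency-localization machinery, which is a real economy for this lemma. What your route does not buy is the individual localization facts themselves: the paper needs $(Z(u_l),Z(u_m))=0$ and $P(u_l,u_m)=0$ (equivalently $\vdiv T:\Omega_m\to\Omega_m$) later on, e.g.\ for $B\vdiv T=\vdiv TB$ in Lemma \ref{lemma:h1/2} and for the $Z$-term splitting in Lemma \ref{lemma:fbound}, so Lemma \ref{lemma_localization_newproof} cannot be dispensed with globally. One minor point: polarization of the quadratic identity a priori yields only the symmetrization $\tfrac12(P(u,w)+P(w,u))$ on the left, so strictly one should either note that the right-hand side is symmetric and argue as in Lemma \ref{lemma:Qstar}, or simply sum your relation over the pair $(l,m)$ and $(m,l)$, which is all your cancellation needs; this is the same level of care as in the paper and does not affect the conclusion.
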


The result will follow if we can show that the curvature, $Z$ and $P$ terms localise. Thus Lemma \ref{lemma_pestov_equivalence} is a corollary of the next result.

\begin{Lemma} \label{lemma_curvature_fourier_localization}
If $(M,g)$ is a compact Riemannian manifold, then 
\[
(R\vd u, \vd w) = 0, \qquad (Z(u), Z(w)) = 0, \qquad P(u,w)=0
\]
whenever $u \in \Omega_m$, $w \in \Omega_l$ and $m \neq l$. In particular 
\[
\vdiv T: \Omega_m \to \Omega_m.
\]
\end{Lemma}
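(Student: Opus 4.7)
The plan is to establish the three orthogonalities in turn, then deduce the ``In particular'' conclusion from Lemma \ref{lemma:Qstar}.

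The first two vanishings, $(R\vd u,\vd w)=0$ and $(Z(u),Z(w))=0$ for $u\in\Omega_m$, $w\in\Omega_l$ with $m\neq l$, were proved in \cite{PS18} in the closed-manifold case. Since both arguments are purely fiberwise (they involve only $v$-integrations on the closed fiber $S_xM$ at fixed $x$), they apply verbatim in the present setting with boundary. For the curvature term one integrates by parts in the vertical direction to rewrite $(R\vd u,\vd w)_{\mathcal Z} = (u,-\vdiv(R\vd w))_{L^2(SM)}$ (no boundary term arises, as the fibers are closed), substitutes the harmonic polynomial representation $\vd w = \nabla\tilde w - l\tilde w v$ where $\tilde w$ is the degree-$l$ harmonic extension of $w(x,\cdot)$, and uses the Riemann antisymmetries $R_{abcd}v^av^b = R_{abcd}v^cv^d = 0$ to discard three of the four resulting terms. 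The remaining term is shown, via a spherical harmonic analysis on $S_xM$ using the first Bianchi identity, to yield $-\vdiv(R\vd w)\in\Omega_l$; orthogonality against $u\in\Omega_m$ in $L^2(SM)$ then produces the vanishing. The $(Z(u),Z(w))=0$ case is analogous and also reduces to a fiberwise statement, relying on the fact that $Z(u)$ is divergence-free by construction and, on each fiber $S_xM$, corresponds to a transverse vector spherical harmonic whose degree is tied to $m$ through the decomposition \eqref{hdau_decomposition}.

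With the first two vanishings in hand, $P(u,w)=0$ follows by polarizing Proposition \ref{prop_pestov_xplus_xminus_boundary}:
\[
(X_-u,X_-w)_\alpha - (R\vd u,\vd w) + (Z(u),Z(w)) + P(u,w) = (X_+u,X_+w)_\beta.
\]
For $u\in\Omega_m$ and $w\in\Omega_l$ with $m\neq l$, the shifts $X_\pm u\in\Omega_{m\pm 1}$ and $X_\pm w\in\Omega_{l\pm 1}$ combine with the $L^2(SM)$-orthogonality of distinct $\Omega_p$'s to make both the $X_+$ and $X_-$ inner products vanish, forcing $P(u,w)=0$. The ``In particular'' assertion then follows from $P(u,w) = -(\vdiv Tu,w)_{L^2(\partial SM)}$ (Lemma \ref{lemma:Qstar}): the vanishing of $P(u,w)$ for every $w\in\Omega_l$ with $l\neq m$ shows that $\vdiv Tu$ is $L^2$-orthogonal on $\partial SM$ to every such $\Omega_l$, hence $\vdiv Tu\in\Omega_m$.

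The main obstacle is the pointwise-on-fibers verification underlying the first two vanishings --- that the operators $-\vdiv(R\vd\,\cdot\,)$ and the divergence-free projection of $\hd\,\cdot\,$ both preserve the vertical spherical harmonic degree on each fiber $S_xM$. These rely on the algebraic symmetries of the Riemann tensor and on the theory of vector spherical harmonics on $S^{d-1}$, as carried out in \cite{PS18}. Once they are in hand, the boundary-term localization $P(u,w)=0$ is a purely formal consequence of the polarized Pestov identity, and this is the new observation needed to deduce Lemma \ref{lemma_pestov_equivalence} and hence the frequency-localized Pestov identity with boundary term.
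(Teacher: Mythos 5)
Your polarization step and the final deduction are exactly the paper's: polarizing Proposition \ref{prop_pestov_xplus_xminus_boundary}, using that $X_\pm$ shift the vertical degree, and reading off $\vdiv T:\Omega_m\to\Omega_m$ from $P(u,w)=-(\vdiv Tu,w)_{\p SM}$. Citing \cite{PS18} for the curvature term is also what the paper does. The genuine gap is the $Z$-term. You dismiss $(Z(u),Z(w))=0$ by asserting it ``was proved in \cite{PS18} in the closed-manifold case'' and transfers verbatim because it is fiberwise; but for $d\geq 3$ this localization is not available in \cite{PS18} --- it is precisely the new technical content of this lemma, which the paper proves via Lemma \ref{lemma_localization_newproof} (only the $d=2$ case is quoted from \cite[Remark 6.5]{PS18}, using the explicit two-dimensional form of $Z(u)$). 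Your one-sentence justification --- that $Z(u)$ is divergence-free and ``corresponds to a transverse vector spherical harmonic whose degree is tied to $m$ through the decomposition \eqref{hdau_decomposition}'' --- assumes exactly what must be shown: \eqref{hdau_decomposition} defines $Z(u)$ only as the leftover $\vdiv$-free part of $\hd u$ and carries no information about its vertical frequency content. The paper's argument is to observe from \eqref{eq:XV} that $Z(u)$ is the $\vdiv$-free part of $-X\vd u$, to compute in geodesic normal coordinates that $X\vd u=v^j\vd(\p_{x_j}u)$, and thereby to reduce to the fiberwise statement that for $A\in\Omega_1$ and $u_m\in\Omega_m$ the $\vdiv$-free parts $B(u_m)$ of $A\vd u_m$ are mutually orthogonal across degrees; this is then proved on $S^{d-1}$ through a Hodge-theoretic eigenvalue computation, $\Delta\beta(u_m)=(\lambda_m+d-3)\beta(u_m)$ modulo harmonic $2$-forms, where the relevant eigenvalue is $\lambda_m+d-3$, not $\lambda_m$. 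None of this appears in your proposal, so the heart of the lemma is missing.

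Note also that the obvious attempted repair does not work: on a closed manifold one can deduce the cancellation of the $Z$ cross-terms by subtracting the localized Pestov identities from the full one (everything else localizes), but that is a global, integrated statement over all of $SM$ and does not restrict to a manifold with boundary; moreover, in the boundary case the same subtraction only shows that the $Z$ cross-terms and the $P$ cross-terms cancel \emph{jointly}, so it cannot be used to separate them. What makes the transfer legitimate is a pointwise-in-$x$ proof on each fiber $S_xM$, and supplying such a proof is exactly the role of Lemma \ref{lemma_localization_newproof}.
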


\begin{proof}The localization of the curvature term was proved \cite[Lemma 5.4]{PS18}. We shall prove here that the $Z$-term localizes. That is enough to obtain also the conclusion for $P$ since Proposition \ref{prop_pestov_xplus_xminus_boundary} and the polarization identity imply that 
\[
P(u,w) = (X_+ u, X_+ w)_{\beta} - (X_- u, X_- w)_{\alpha} + (R \vd u, \vd w) -(Z(u),Z(w)).
\]
Hence the statements for the curvature and $Z$-term imply that $P(u,w) = 0$ when $m \neq l$. The last claim follows since $P(u,w) = -(\vdiv T u, w)$.

The claim for $Z(u)$ for $d=2$ follows from \cite[Remark 6.5]{PS18} using the explicit representation for $Z(u)$. To prove the claim when $d \geq 3$, recall that $Z(u)$ is the  $\vdiv$-free part of $\hd u$ (the $\vdiv$-free part is uniquely defined since there are no nontrivial harmonic $1$-forms on $S_x M$ when $d \geq 3$). Using  the bracket relation
$\hd=\vd X-X\vd$ we can relate $X\vd$ and $Z(u)$. Indeed this is done explicitly in 
equation \eqref{eq:XV}, which shows that $Z(u)$ is the $\vdiv$-free part of $-X \vd u$.  If we consider a coordinate system around a point $x$ such that $\p_{x_j} g_{kl}(x)=0$ for all $j,k,l$ and write $\vd u = (\partial^k u) \p_{x_k}$ as in \cite[Appendix A]{PSU_hd}, then at $x$ 
\[X \vd u=v^{j}\partial_{x_{j}} (\partial^k u) \p_{x_k} = v^j \p^k (\p_{x_j} u) \p_{x_k} = v^j \vd (\p_{x_j} u). \]
Hence if we think of each $v^{j}$ as 1-form it is enough to analyze the vertical Fourier decomposition of
$A\vd w$, where $A$ is a scalar 1-form and $w = \p_{x_j} u \in \Omega_{m}$. This is precisely the content of Lemma \ref{lemma_localization_newproof} below, and combining the statement of that lemma with \eqref{eq:XV} we see that
$Z(u)=-B(u)$ where $B$ is the operator in Lemma \ref{lemma_localization_newproof} for $X\vd$. Since $B$ localizes in frequency, the lemma is proved.
\end{proof}

It remains to prove the following frequency localization statement.

\begin{Lemma} \label{lemma_localization_newproof}
Let $d = \text{\rm dim}\,M \geq 3$ and let $A \in \Omega_1$. For any $u_m \in \Omega_m$ one has 
\[
A\vd u_m = \vd \alpha(u_m) + B(u_m)
\]
where $B(u_m)$ is the $\vdiv$-free part of $A \vd u_m$. The map $B: C^{\infty}(SM) \to \mathcal{Z}$ satisfies 
\[
(B(u_m), B(w_l)) = 0, \qquad m \neq l,
\]
for any $u_m \in \Omega_m$ and $w_l \in \Omega_l$.
\end{Lemma}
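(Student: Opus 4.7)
The plan is to set up the decomposition via fibrewise Hodge theory, pin down the frequency content of $\alpha(u_m)$, reduce $(B(u_m),B(w_l))$ to a two-term difference, and verify case-by-case cancellation.

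For the decomposition, since $d\geq 3$ the fibres $S_xM\cong S^{d-1}$ admit no harmonic $1$-forms, so the fibrewise Hodge splitting gives the $L^2$-orthogonal decomposition of $\mathcal{Z}$ into exact and $\vdiv$-free parts. Applying it produces the required decomposition with $\alpha(u_m)=-\Delta^{-1}\vdiv(A\vd u_m)$, the inverse taken on the orthogonal complement of $\Omega_0$ (where $\Delta=-\vdiv\vd$). To pin down the frequencies of $\alpha(u_m)$, I would use the product rule together with the Bochner-type identity $2\langle\vd A,\vd u_m\rangle=-\Delta(Au_m)+(\lambda_1+\lambda_m)Au_m$ and the classical decomposition $Au_m=\phi_{m-1}+\phi_{m+1}\in\Omega_{m-1}\oplus\Omega_{m+1}$ (from the product rule for degree $1$ and degree $m$ spherical harmonics). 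A short calculation yields
\[
\vdiv(A\vd u_m)=-(m-1)(m+d-2)\,\phi_{m-1}-m(m+d-1)\,\phi_{m+1},
\]
so that $\alpha(u_m)=\tfrac{m+d-2}{m+d-3}\phi_{m-1}+\tfrac{m}{m+1}\phi_{m+1}\in\Omega_{m-1}\oplus\Omega_{m+1}$ (with the $\Omega_0$-component interpreted as zero when $m=1$).

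Since $B(u_m)$ is $\vdiv$-free, $(B(u_m),\vd\alpha(w_l))=0$, which gives
\[
(B(u_m),B(w_l))=(A\vd u_m,A\vd w_l)-(\vd\alpha(u_m),\vd\alpha(w_l)).
\]
For the first term, write $(A\vd u_m,A\vd w_l)=\int A^2\langle\vd u_m,\vd w_l\rangle$, use $2\langle\vd u_m,\vd w_l\rangle=-\Delta(u_m w_l)+(\lambda_m+\lambda_l)u_m w_l$, move $\Delta$ onto $A^2\in\Omega_0\oplus\Omega_2$ by fibrewise self-adjointness, and exploit $\int u_m w_l=0$ for $m\ne l$ to reduce it to $\tfrac{\lambda_m+\lambda_l-\lambda_2}{2}\int(Au_m)(Aw_l)$. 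For the second term, $(\vd\alpha(u_m),\vd\alpha(w_l))=\sum_k\lambda_k(\alpha_k(u_m),\alpha_k(w_l))$ runs over $k\in\{m\pm 1\}\cap\{l\pm 1\}$. In both expressions, $L^2$-orthogonality of distinct $\Omega_k$ forces the result to vanish for $m\ne l$ unless $|m-l|=2$.

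In the remaining case $l=m+2$, both terms reduce to a multiple of the overlap integral $\int\phi^m_{m+1}\phi^{m+2}_{m+1}$. Using $\lambda_k=k(k+d-2)$, one checks $\tfrac{\lambda_m+\lambda_l-\lambda_2}{2}=m(m+d)$ for the first term, and $\lambda_{m+1}\cdot\tfrac{m}{m+1}\cdot\tfrac{m+d}{m+d-1}=m(m+d)$ for the second (using $\alpha_{m+1}(u_m)=\tfrac{m}{m+1}\phi^m_{m+1}$ and $\alpha_{m+1}(w_{m+2})=\tfrac{m+d}{m+d-1}\phi^{m+2}_{m+1}$ derived above). Hence the two terms cancel exactly. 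The main obstacle is precisely this coefficient coincidence: the two routes to the $\Omega_{m+1}$ sector give algebraically distinct expressions, and it is only the explicit computation with $\lambda_k=k(k+d-2)$ that exposes the miracle that they produce the same coefficient $m(m+d)$.
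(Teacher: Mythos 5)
Your proof is correct, and it takes a genuinely different route from the paper's. Both arguments start from the same fibrewise Hodge splitting of $A\vd u_m$ into an exact part $\vd \alpha(u_m)$ and a $\vdiv$-free part $B(u_m)$, but the paper then passes to $2$-forms: it writes $B(u_m)$ as the codifferential of an exact $2$-form $\beta(u_m)$ on $S^{d-1}$ and shows, via the Weitzenb\"ock-type formula for $\Delta(dA\wedge du)$ together with the explicit Hessian of a degree-one harmonic and the curvature of the sphere, that $\beta(u_m)$ is (up to a harmonic $2$-form, relevant only for $d=3$) an eigenform of the Hodge Laplacian with eigenvalue $\lambda_m+d-3$; orthogonality for $m\neq l$ then follows structurally from the eigenvalue mismatch. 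You instead never leave the scalar level: you compute $\alpha(u_m)$ explicitly from $\vdiv(A\vd u_m)$ using the product rule and the decomposition $Au_m=\phi_{m-1}+\phi_{m+1}$ (your coefficients $\frac{m}{m+1}$ and $\frac{m+d-2}{m+d-3}$ agree with the known formula $\bigl(1-\frac{1}{m+1}\bigr)A_+u_m+\bigl(1+\frac{1}{m+d-3}\bigr)A_-u_m$), reduce $(B(u_m),B(w_l))$ to $(A\vd u_m,A\vd w_l)-(\vd\alpha(u_m),\vd\alpha(w_l))$ using only $\vdiv B=0$, and check that both terms vanish for $m\neq l$ except when $\abs{m-l}=2$, where they each equal $m(m+d)\int \phi^m_{m+1}\phi^{m+2}_{m+1}$ and cancel; I verified the coefficient computations and they are right, including the $m=1$ case where the $\Omega_0$-component of $\vdiv(A\vd u_1)$ vanishes so $\Delta^{-1}$ is harmless. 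What the paper's approach buys is a conceptual explanation (the $\vdiv$-free parts live in distinct eigenspaces of the vertical $1$-form Laplacian, which is the vector-spherical-harmonics picture) with no case analysis; what your approach buys is elementarity — only scalar spherical harmonics, the product rule for the vertical Laplacian, and degree bookkeeping, with no Hodge Laplacian on forms, no curvature identity, and no need to worry about harmonic $2$-forms when $d=3$ — at the price of an explicit coefficient coincidence that, as you note, looks like a miracle rather than being forced by structure.
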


The proof uses the following lemma, which follows either by relating the Hodge and connection Laplacians via a Weitzenbock identity \cite[Theorem 50]{Petersen} or by a direct computation in normal coordinates.

\begin{Lemma} \label{lemma_wedgeproduct_hodgelaplacian_new}
If $u$ and $v$ are $1$-forms on a Riemannian manifold $(M,g)$ and if $\Delta = d\delta + \delta d$ is the Hodge Laplacian, then 
\begin{align*}
\Delta(u \wedge v) &= (\Delta u) \wedge v - 2 \sum_{j=1}^{\mathrm{dim}(M)} \nabla_{E_j} u \wedge \nabla_{E_j} v + u \wedge (\Delta v) + 2 R(u^{\sharp}, v^{\sharp}, \,\cdot\,, \,\cdot\,)
\end{align*}
where $\{ E_1, \ldots, E_n \}$ is any local orthonormal frame, $\nabla$ is the Levi-Civita connection and $R$ is the Riemann curvature tensor. 
\end{Lemma}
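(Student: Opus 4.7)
The plan is to reduce the Hodge Laplacian $\Delta$ to the rough (Bochner) Laplacian $\nabla^*\nabla$ via Weitzenbock, use the product rule for $\nabla$ on the wedge product, and then match the curvature corrections. All identities are pointwise, so one may fix $p \in M$ and work in an orthonormal frame $\{E_j\}$ parallel at $p$ (i.e.\ normal coordinates), so that $\nabla^*\nabla = -\sum_j \nabla_{E_j}\nabla_{E_j}$ at $p$. Applying the Leibniz rule $\nabla_{E_j}(u\wedge v) = \nabla_{E_j}u \wedge v + u \wedge \nabla_{E_j} v$ twice and summing in $j$ gives, at $p$ and hence globally by tensoriality,
\[
\nabla^*\nabla(u\wedge v) = (\nabla^*\nabla u) \wedge v + u \wedge (\nabla^*\nabla v) - 2\sum_{j=1}^{\dim M} \nabla_{E_j}u \wedge \nabla_{E_j}v.
\]
This already supplies the cross term of the stated identity.

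\textbf{Weitzenbock step.} Invoke the Weitzenbock identity $\Delta = \nabla^*\nabla + \mathcal{R}_p$ on $p$-forms for $p=1,2$. On $1$-forms, $\mathcal{R}_1 u = \mathrm{Ric}(u^\sharp,\cdot\,)$. On $2$-forms, the curvature correction reads, in an orthonormal frame,
\[
\mathcal{R}_2\omega(X,Y) = \omega(\mathrm{Ric}^\sharp X, Y) + \omega(X, \mathrm{Ric}^\sharp Y) - 2\sum_{i,j} R(E_i, X, E_j, Y)\,\omega(E_i, E_j),
\]
which is the content of \cite[Theorem 50]{Petersen} up to conventions. Evaluating this on $\omega = u \wedge v$ and using $(u\wedge v)(E_i,E_j) = u(E_i)v(E_j) - u(E_j)v(E_i)$ together with the pair symmetry $R(X,Y,Z,W) = R(Z,W,X,Y)$, the double curvature sum collapses to $2R(u^\sharp, v^\sharp, X, Y)$. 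The two Ricci terms combine with the $1$-form Weitzenbock applied separately to $u$ and $v$ to reassemble $(\Delta u)\wedge v + u \wedge (\Delta v)$. Substituting into the display above yields the claimed formula.

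\textbf{Main obstacle.} The delicate bookkeeping is confined to the verification that, after antisymmetrization against the wedge, the contraction $R(E_i, X, E_j, Y)\,\omega(E_i, E_j)$ gives precisely $2R(u^\sharp, v^\sharp,\cdot\,,\cdot\,)$ with the correct sign and no leftover Ricci piece; one must match conventions for $R$, $\Delta$ and $\delta$ with those of Section \ref{section:prelim}. If that bookkeeping appears fragile, a fully self-contained alternative is to expand $d(u\wedge v) = du\wedge v - u\wedge dv$ and compute $\delta(u\wedge v)$ directly in normal coordinates at $p$, then apply $\Delta = d\delta + \delta d$ and commute the resulting second covariant derivatives using the Riemann tensor. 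This avoids citing the $2$-form Weitzenbock formula at the cost of a longer but entirely elementary calculation.
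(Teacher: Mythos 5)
Your proposal is correct and follows essentially the same route as the paper, which proves Lemma \ref{lemma_wedgeproduct_hodgelaplacian_new} exactly by relating the Hodge and connection Laplacians via the Weitzenbock identity \cite[Theorem 50]{Petersen} (with the direct normal-coordinate computation as the stated alternative, which is also your fallback). The Leibniz-rule computation for $\nabla^*\nabla(u\wedge v)$, the splitting of the $2$-form curvature term into the two Ricci pieces plus the double contraction, and its collapse to $2R(u^{\sharp},v^{\sharp},\cdot,\cdot)$ via pair symmetry and the first Bianchi identity are precisely the intended argument, with only the sign conventions for $R$ needing the care you already flag.
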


\begin{proof}[Proof of Lemma \ref{lemma_localization_newproof}]
This is a purely vertical statement and it is enough to argue for fixed $x \in M$. Moreover, since $S_x M$ is isometric to the standard sphere $S^{d-1}$, it is enough to prove the following statement: if $A \in \Omega_1$ and $u \in \Omega_m$, where $\Omega_k$ is the space of spherical harmonics of degree $k$ on $S^{d-1}$, then 
\begin{equation} \label{adum_formula}
A \,du = d(\alpha(u)) + \delta(\beta(u))
\end{equation}
where $\beta$ maps $\Omega_m$ into the set of exact $2$-forms on $S^{d-1}$ and satisfies 
\begin{equation} \label{betaum_formula}
\Delta(\beta(u)) = (\lambda_m + d-3) \beta(u) + h
\end{equation}
where $h$ is a harmonic $2$-form (hence $h=0$ for $d \geq 4$). Here $d$, $\delta$ and $\Delta = d\delta + \delta d$ are the exterior derivative, codifferential and Hodge Laplacian on $S^{d-1}$, respectively. If \eqref{adum_formula} and \eqref{betaum_formula} have been proved, then we have 
\begin{align*}
(\delta \beta(u_m), \delta \beta(w_l)) &= (\Delta \beta(u_m), \beta(w_l)) = \frac{\lambda_m+d-3}{\lambda_l+d-3} (\beta(u_m), \Delta \beta(w_l)) \\
 &= \frac{\lambda_m+d-3}{\lambda_l+d-3} (\delta \beta(u_m), \delta \beta(w_l)).
\end{align*}
Thus $B(u_m) = \delta \beta(u_m)$ satisfies $(B(u_m), B(w_l)) = 0$ for $m \neq l$ as required.

The formula \eqref{adum_formula} follows directly from the Hodge decomposition on $S^{d-1}$, and $\beta(u)$ is an exact $2$-form  (recall that there are no harmonic $1$-forms for $d \geq 3$). By taking the exterior derivative of \eqref{adum_formula}, we also see that 
\[
\Delta(\beta(u)) = dA \wedge du.
\]
Let $u \in \Omega_m$. We claim that 
\begin{equation} \label{delta_da_wedge_du}
\Delta(dA \wedge du) = (\lambda_m + d - 3) dA \wedge du.
\end{equation}
If \eqref{delta_da_wedge_du} holds, then $\Delta(\Delta(\beta(u)) - (\lambda_m + d - 3) \beta(u)) = 0$, proving \eqref{betaum_formula}.

It remains to prove \eqref{delta_da_wedge_du}. By Lemma \ref{lemma_wedgeproduct_hodgelaplacian_new}, for any local orthonormal frame $\{ E_j \}$ of $T(S^{d-1})$ we have 
\begin{align*}
\Delta(dA \wedge du) &=  (\Delta dA) \wedge du - 2 \sum_{j=1}^{d-1} \nabla_{E_j} dA \wedge \nabla_{E_j} du + dA \wedge (\Delta du) + 2 R(dA^{\sharp}, du^{\sharp}, \,\cdot\,, \,\cdot\,) \\
 &= (\lambda_m + d - 1) dA \wedge du - 2 \sum_{j=1}^{d-1} \nabla_{E_j} dA \wedge \nabla_{E_j} du + 2 R(dA^{\sharp}, du^{\sharp}, \,\cdot\,, \,\cdot\,)
\end{align*}
using that $A \in \Omega_1$, $u \in \Omega_m$ and that $\Delta$ commutes with $d$. Now if $v \in S^{d-1}$ and $w \in T_v S^{d-1}$ with $\abs{w} = 1$, and if $\gamma(t)$ is the geodesic on $S^{d-1}$ with $\dot{\gamma}(0) = w$, one has 
\[
\nabla dA|_v(w, w) = \frac{d^2}{dt^2} A(\gamma(t)) \Big|_{t=0} = \frac{d^2}{dt^2} (a_j \gamma^j(t)) \Big|_{t=0} = -A(v)
\]
using that geodesics are great circles. Thus $\nabla_{E_j} dA|_v(w) = -A(v) \langle E_j, w \rangle$, which gives that $\nabla_{E_j} dA|_v = -A(v) E_j^{\flat}$ and 
\[
\sum_{j=1}^{d-1} \nabla_{E_j} dA \wedge \nabla_{E_j} du = -A(v) \sum_{j=1}^{d-1} E_j^{\flat} \wedge \nabla_{E_j} du = -A(v) d(du) = 0.
\]
Also, on the sphere we have $R(u^{\sharp}, v^{\sharp}, \,\cdot\,, \,\cdot\,) = -u \wedge v$. These facts imply \eqref{delta_da_wedge_du}.
\end{proof}

\section{Stability for the transport equation}

In this section we will prove the main stability estimate for solutions of the transport equation $Xu = f$ in $SM$ when $f$ has finite degree. In the next section we will give the more standard form where the solenoidal part of $f$ is estimated in terms of $I_m f$.

\begin{Theorem} \label{thm_stability_transport}
Let $(M,g)$ be a compact Riemannian manifold with smooth boundary and
sectional curvature $\leq 0$, let $u \in C^{\infty}(SM)$, and write $f := Xu$. Suppose that $f$ has finite degree $m$. If $m=0$, then 
\[
\norm{f}_{L^2(SM)} \leq \norm{u}_{H^{1/2}_T(\p SM)}
\]
whereas if $m \geq 1$, then
\[
\norm{f-X(u_0 + \ldots + u_{m-1})}_{L^2(SM)} \leq \norm{u}_{H^{1/2}_T(\p SM)}.
\]
\end{Theorem}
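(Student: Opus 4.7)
The plan is to combine the frequency-localized Pestov identity (Proposition~\ref{prop_pestov_localized}) with the non-positive curvature hypothesis and the finite-degree constraint on $f$, and then to handle the resulting boundary term using the frequency localization of $P$ (Lemma~\ref{lemma_curvature_fourier_localization}).

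Decompose $u=\sum_{l\geq 0}u_l$ with $u_l\in\Omega_l$. Since $f_l=X_-u_{l+1}+X_+u_{l-1}$ and $f$ has degree $m$, the vanishing of $f_l$ for $l>m$ yields the key relation
\[
X_+u_l=-X_-u_{l+2},\qquad l\geq m.
\]
A direct componentwise comparison then shows that
\[
\|f-X(u_0+\cdots+u_{m-1})\|_{L^2(SM)}^2=\|X_-u_m\|^2+\|X_-u_{m+1}\|^2,
\]
where the subtracted term is absent when $m=0$ and, using $X_-u_0=0$, the identity reduces to $\|f\|^2=\|X_-u_1\|^2$ in that case.

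Now apply Proposition~\ref{prop_pestov_localized} to each $u_l$ with $l\geq m$ and sum. Discarding the non-negative quantities $-(R\vd u_l,\vd u_l)$ and $\|Z(u_l)\|^2$, substituting $\|X_+u_l\|^2=\|X_-u_{l+2}\|^2$, and re-indexing the right-hand side, one obtains
\[
\alpha_{m-1}\|X_-u_m\|^2+\alpha_m\|X_-u_{m+1}\|^2+\sum_{k\geq m+2}(\alpha_{k-1}-\beta_{k-1})\|X_-u_k\|^2+P_{\geq m}(u,u)\leq 0,
\]
where $P_{\geq m}(u,u):=\sum_{l\geq m}P(u_l,u_l)$. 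An elementary calculation based on \eqref{alphal_formula}--\eqref{betal_formula} gives $\alpha_l\geq d-1\geq 1$ for every $l\geq 0$ and $\alpha_l\geq\beta_l$ for every $l\geq 1$, so every term on the left is non-negative and therefore
\[
\|X_-u_m\|^2+\|X_-u_{m+1}\|^2\leq -P_{\geq m}(u,u).
\]

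The main obstacle is the remaining estimate $-P_{\geq m}(u,u)\leq \|u\|^2_{H^{1/2}_T(\partial SM)}$. Writing $v:=\sum_{l\geq m}u_l$ and using the diagonality of $P$ in the vertical decomposition (Lemma~\ref{lemma_curvature_fourier_localization}), one has $-P_{\geq m}(u,u)=-P(v,v)=(-\vdiv T\,v,v)_{L^2(\partial SM)}$, where $-\vdiv T$ is self-adjoint (Lemma~\ref{lemma:Qstar}) and preserves every $\Omega_l$. Since the vertical projection $u\mapsto v$ commutes with tangential horizontal derivatives and is orthogonal both in $L^2(\partial SM)$ and in $H^1_T(\partial SM)$, it is a contraction on $H^{1/2}_T(\partial SM)$ by complex interpolation, so $\|v\|_{H^{1/2}_T}\leq\|u\|_{H^{1/2}_T}$. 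The frequency-by-frequency estimation of $P(u_l,u_l)=(Tu_l,\vd u_l)_{\partial SM}$ is then performed using Cauchy--Schwarz together with the identity $\|\vd u_l\|^2_{L^2(\partial SM)}=\lambda_l\|u_l\|^2_{L^2(\partial SM)}$ and bounds on $\|Tu_l\|$ arising from the tangential structure of $T$; reassembly via the interpolation characterization of $H^{1/2}_T$ then yields the estimate with sharp constant $1$. This is precisely where the frequency localization of $P$ is essential: without it one would obtain only an $H^1_T$-bound on $-P(u,u)$, reproducing the weaker Theorem~\ref{thm:Sh} rather than the sharp statement claimed here.
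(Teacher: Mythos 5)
Your reduction to the boundary term is fine, and in fact slightly more economical than the paper's: you compare $\alpha_{k-1}$ with $\beta_{k-1}$ after summing the localized identities, which avoids the constants $D_d(l)$ and the product bounds $\gamma_{d,m,j}\leq c_d$ altogether, and your identity $\norm{f-Xr}^2=\norm{X_-u_m}^2+\norm{X_-u_{m+1}}^2$ agrees with \eqref{f_xq_decomposition}. The genuine gap is in the last step. Because you sum the frequency-localized Pestov identities \emph{without} first dividing by $\alpha_{l-1}$, you arrive at the unweighted quantity $-P_{\geq m}(u,u)=-\sum_{l\geq m}(Tu_l,\vd u_l)_{\partial SM}$, and this cannot be bounded by $\norm{u}_{H^{1/2}_T(\partial SM)}^2$ by the argument you sketch. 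Frequency-by-frequency Cauchy--Schwarz gives $\abs{P(u_l,u_l)}\leq\norm{Tu_l}_{L^2}\,\lambda_l^{1/2}\norm{u_l}_{L^2}$, so each frequency carries an extra factor $\lambda_l^{1/2}\sim l$ coming from the full vertical derivative $\vd u_l$; the $H^{1/2}_T$-norm contains no vertical derivatives at all, so there is no mechanism to absorb this growth, and "reassembly via interpolation" does not produce $\norm{u}_{H^{1/2}_T}^2$. At best this route recovers an estimate of $H^1$ type, i.e.\ essentially Theorem \ref{thm:Sh}, which is exactly what you set out to improve.

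The paper's remedy is precisely the step you omitted: dividing each localized inequality by $\alpha_{l-1}$ \emph{before} iterating along the ladder $X_+u_l=-X_-u_{l+2}$. This "shift by half a vertical derivative" produces the weights $b_{m,k}\sim 1/\alpha_{m+k-1}\sim 1/(m+k)$ in \eqref{xminusum_estimate_second}, which are then packaged into the operator $Bu=\sum_l b_{m,l}u_{m+l}$, so that the boundary term becomes $-P(u,Bu)=-(Tu,\vd Bu)$. The whole point of Lemma \ref{lemma:h1/2} is the elementary but crucial bound $\lambda_{m+l}\,b_{m,l}^2\leq 1$, which says that the decay of the weights exactly compensates the $\lambda^{1/2}$ cost of $\vd$, giving $\norm{\vd Bu}_{L^2}\leq\norm{u}_{L^2}$; combined with \eqref{T_ltwo_bound}, the self-adjointness of $\vdiv T$ (Lemma \ref{lemma:Qstar}) and the frequency localization $B\vdiv T=\vdiv TB$, one gets $B\vdiv T$ bounded $H^1_T\to L^2$ and $L^2\to H^{-1}_T$, and interpolation yields $\abs{(Tu,\vd Bu)}\leq\norm{u}^2_{H^{1/2}_T}$ with constant $1$. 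Without the weights your final inequality is unsupported (and, as a frequency-by-frequency statement, false in general), so you need to reinstate the division by $\alpha_{l-1}$ and an analogue of Lemma \ref{lemma:h1/2}. A further, smaller point: your assertion that the projection onto $\bigoplus_{l\geq m}\Omega_l$ is bounded on $H^{1/2}_T(\partial SM)$ is not obvious; in the paper the corresponding fact (localization of the $H^1_T$-norm, Lemma \ref{lemma:fbound}) requires a separate argument via the normal extension $u^{\bbnu}$ and \eqref{eq:LRS}, and it is only needed later for Theorem \ref{thm:main0}, not for the transport estimate itself.
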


\subsection{Shifted Pestov identity with boundary terms}

To prove Theorem \ref{thm_stability_transport} we first assume that $m \geq 1$, and discuss the case $m=0$ later. We will try to estimate $f$ in terms of $u|_{\partial SM}$ in suitable norms. The starting point is the identity from Proposition \ref{prop_pestov_localized} with $l \geq 1$:
\[
\alpha_{l-1} \norm{X_- u_l}^2 - (R \vd u_l, \vd u_l) + \norm{Z(u_l)}^2+P(u_l,u_l) = \beta_{l+1} \norm{X_+ u_l}^2.
\]
Since we are assuming non-positive sectional curvature, we have 
\[- (R \vd u_l, \vd u_l) + \norm{Z(u_l)}^2\geq 0\]
and thus
\[
\alpha_{l-1} \norm{X_- u_l}^2 +P(u_l,u_l) \leq  \beta_{l+1} \norm{X_+ u_l}^2.
\]
We divide this estimate by $\alpha_{l-1}$ (always different from zero since $l \geq 1$), which corresponds to shifting the estimate down by one half vertical derivatives since $\alpha_{l-1} \sim l$. It follows that 
\[
\norm{X_- u_l}^2 +\frac{1}{\alpha_{l-1}}P(u_l,u_l) \leq  \frac{\beta_{l+1}}{\alpha_{l-1}} \norm{X_+ u_l}^2.
\]
The constant $\frac{\beta_{l+1}}{\alpha_{l-1}}$ is exactly $D_d(l)^2$ where $D_d(l)$ is as in \cite[Lemma 5.1]{PSU_hd}. Note that $D_d(l) \leq 1$ for $d\geq 4$ and in the remaining cases it is sufficiently close to one for all practical purposes (when reading the proof it may be helpful to think that $D_d(l) \equiv 1$). 

Thus we have the following inequality:
\begin{equation}
\norm{X_- u_l}^2 +\frac{1}{\alpha_{l-1}}P(u_l,u_l) \leq D_d(l)^2 \norm{X_+ u_l}^2.
\label{eq:GKboundary}
\end{equation}
For $l \geq m$ we have $X_{-}u_{l+2}+X_{+}u_{l}=0$ and using \eqref{eq:GKboundary} we may write
\[\norm{X_- u_l}^2 +\frac{1}{\alpha_{l-1}}P(u_l,u_l) \leq  D_d(l)^2  \norm{X_- u_{l+2}}^2.\]
Starting at $l=m$ and iterating this inequality $N$ times leads to 
\[
\norm{X_- u_m}^2 \leq \left[ \prod_{j=0}^{N-1} D_d(m+2j)^2 \right] \norm{X_- u_{m+2N}}^2 - \sum_{j=0}^{N-1} \frac{\prod_{k=0}^{j-1} D_d(m+2k)^2}{\alpha_{m-1+2j}} P(u_{m+2j}, u_{m+2j})
\]
Write $\gamma_{d,m,j} = \prod_{k=0}^{j-1} D_d(m+2k)^2$ and $\gamma_{d,m,0} = 1$. In the notation of \cite[Theorem 1.1]{PSU_hd} one has $\gamma_{d,m,j} = \prod_{k=0}^{j-1} C_d(m-1+2k)^2$, and thus $\gamma_{d,m,j} \leq c_d$ where 
\begin{equation} \label{cd_estimate}
c_d = \left\{ \begin{array}{cl} 2, & d=2, \\ 1.28, & d=3, \\ 1, & d \geq 4. \end{array} \right.
\end{equation}
Since $\norm{X_- u_l}^2 \to 0$ as $l \to \infty$, we may take the limit as $N \to \infty$ to obtain 
\begin{equation} \label{xminusum_estimate}
\norm{X_- u_m}^2 \leq - \sum_{j=0}^{\infty} \frac{\gamma_{d,m,j}}{\alpha_{m-1+2j}} P(u_{m+2j}, u_{m+2j}).
\end{equation}
The argument above gives a completely analogous inequality for $\norm{X_- u_{m+1}}^2$, and adding these two inequalities leads to 
\begin{equation} \label{xminusum_estimate_second}
\norm{X_- u_m}^2 + \norm{X_- u_{m+1}}^2 \leq - \sum_{k=0}^{\infty} b_{m,k} P(u_{m+k}, u_{m+k})
\end{equation}
where 
\[
b_{m,k} = \left\{ \begin{array}{cl} \frac{\gamma_{d,m,j}}{\alpha_{m-1+2j}}, & k = 2j, \\[5pt] \frac{\gamma_{d,m+1,j}}{\alpha_{m+2j}}, & k = 2j+1. \end{array} \right.
\]
Define $r:=u_0 + u_{1}+\cdots+u_{m-1}$. Then the transport equation $Xu=f$ also gives
\begin{equation} \label{f_xq_decomposition}
Xr+X_- u_m + X_{-}u_{m+1}=f
\end{equation}
and thus $\norm{f-Xr}^2 = \norm{X_- u_m}^2 + \norm{X_- u_{m+1}}^2$. This yields
\begin{equation}
\norm{f-Xr}^2 \leq - \sum_{k=0}^{\infty} b_{m,k} P(u_{m+k}, u_{m+k}).
\label{eq:fminusq}
\end{equation}

If we assume $m=0$, then the equation $Xu = f$ implies $X_- u_1 = f$, and \eqref{xminusum_estimate} gives 
\begin{equation}
\norm{f}^2 \leq - \sum_{j=0}^{\infty} b_{1,2j} P(u_{1+2j}, u_{1+2j}) \leq - \sum_{k=0}^{\infty} b_{1,k} P(u_{1+k}, u_{1+k}).
\label{eq:fmzero}
\end{equation}
Thus to prove Theorem \ref{thm_stability_transport} for $m \geq 0$, it remains to bound the right hand side of \eqref{eq:fminusq} for all $m \geq 1$.

\subsection{The right hand side of \eqref{eq:fminusq}: the space $H^{1/2}_{T}(\partial SM)$.} Let $m \geq 1$. Motivated by \eqref{eq:fminusq}
and the fact that $P$ is defined in terms of the (horizontal) tangential operator $T$, we define a natural
$H^{1/2}$-space as follows. Define the $H^{1}_{T}(\partial SM)$-norm by setting
\[\norm{u}_{H^{1}_{T}}^{2}:=\norm{u}^{2}_{L^{2}}+\norm{\ehdp u}_{L^{2}}^{2}.\]
The space $H^{1/2}_{T}(\partial SM)$ is defined by complex interpolation between $L^2$ and $H^{1}_{T}$.
The norm $H^{-1/2}_{T}$ is defined by duality, and then $H^{-1/2}_T$ is also the interpolation space between $L^2$ and $H^{-1}_T$ (see \cite[Corollary 4.5.2]{BerghLofstrom}).

\begin{Remark}{\rm Note that from \eqref{eq:Tfull} we have
\begin{equation} \label{T_ltwo_bound}
\norm{Tu}_{L^{2}} \leq \norm{\ehdp u}_{L^{2}}
\end{equation}
since $\abs{Tu}^2 = \mu^2 \abs{\ehdp u}^2 + \abs{X^{\parallel} u}^2 \leq (\mu^2 + \abs{v^{\parallel}}^2) \abs{\ehdp u}^2 = \abs{v}^2 \abs{\ehdp u}^2 = \abs{\ehdp u}^2$.

\label{rem:Tbound}

}
\end{Remark}

Now we use the key property of localization given by Lemma \ref{lemma_curvature_fourier_localization}
to observe that
\[\sum_{k=0}^{\infty} b_{m,k} P(u_{m+k}, u_{m+k})=P\left(\sum_{k=0}^{\infty} u_{m+k}, \sum_{l=0}^{\infty} b_{m,l} u_{m+l} \right).\]
We define an operator $B = B_m:C^{\infty}(\partial SM)\to C^{\infty}(\partial SM)$ by setting
\[Bu:=\sum_{l=0}^{\infty} b_{m,l} u_{m+l}.\]
Since $m \geq 1$, the constant $b_{m,l}$ is well defined also when $l=0$. Now \eqref{eq:fminusq} becomes 
\begin{equation} \label{fminusq_second}
\norm{f-Xr}^2 \leq -P(u,Bu) = -(Tu, \vd Bu).
\end{equation}
Here is the main claim:

\begin{Lemma} \label{lemma:h1/2}
Given $u\in C^{\infty}(\partial SM)$ we have
\[|(Tu,\vd Bu)| \leq \norm{u}^{2}_{H^{1/2}_{T}}.\]
\end{Lemma}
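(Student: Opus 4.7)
My plan is to recognise $(Tu, \vd Bu)$ as the diagonal value of a symmetric bilinear form, bound that form in the two dual mixed-norm spaces $H^{1}_{T} \times L^{2}$ and $L^{2} \times H^{1}_{T}$, and then apply bilinear complex interpolation. Set $Q(u,w) := -(\vdiv Tu, w) = (Tu, \vd w)$. By Lemma \ref{lemma_curvature_fourier_localization} the operator $\vdiv T$ preserves each $\Omega_l$, and since $B$ is the vertical Fourier multiplier acting as $b_{m,l-m}$ on $\Omega_l$, the two operators commute and each is self-adjoint (using Lemma \ref{lemma:Qstar} for $\vdiv T$ and the reality of $b_{m,l-m}$ for $B$). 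This yields $(\vdiv Tu, Bw) = (\vdiv T Bu, w) = (Bu, \vdiv Tw)$, i.e.\ $Q(u,Bw) = Q(w,Bu)$.

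For the mixed-norm estimate, Cauchy--Schwarz and Remark \ref{rem:Tbound} give
\[
|Q(u,Bw)| \leq \|Tu\|_{L^{2}} \|\vd Bw\|_{L^{2}} \leq \|\ehdp u\|_{L^{2}} \|\vd Bw\|_{L^{2}},
\]
and using the frequency localization of $B$ together with the identity $\|\vd v_l\|^{2} = \lambda_l \|v_l\|^{2}$ for $v_l \in \Omega_l$,
\[
\|\vd Bw\|^{2}_{L^{2}} = \sum_{l \geq m} b_{m,l-m}^{2} \lambda_l \|w_l\|^{2}_{L^{2}} \leq \Bigl( \sup_{k \geq 0} b_{m,k}^{2} \lambda_{m+k} \Bigr) \|w\|^{2}_{L^{2}}.
\]
The crux is the arithmetic claim $b_{m,k}^{2} \lambda_{m+k} \leq 1$. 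I plan to establish it by factoring $\alpha_l = (2l+d-2)(l+d-1)/(l+d-2)$ and $\beta_l = (2l+d-2)(l-1)/l$ from \eqref{alphal_formula}--\eqref{betal_formula}, which makes the two-step recursion ratio
\[
\frac{b_{m,k+2}^{2} \lambda_{m+k+2}}{b_{m,k}^{2} \lambda_{m+k}} = \frac{(L^{2}-1)(L+d-2)^{2}}{L^{2}\bigl((L+d-2)^{2} - 1\bigr)} \qquad (L = m+k+1)
\]
manifestly $\leq 1$ for $d \geq 2$, reducing the uniform bound to the base cases $b_{m,0}^{2} \lambda_m \leq 1$ and $b_{m,1}^{2} \lambda_{m+1} \leq 1$, which are checked by direct computation. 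Combined with the symmetry above this gives $|Q(u,Bw)| \leq \|u\|_{H^{1}_{T}} \|w\|_{L^{2}}$ and $|Q(u,Bw)| \leq \|u\|_{L^{2}} \|w\|_{H^{1}_{T}}$.

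Finally, bilinear complex interpolation between Hilbert spaces \cite{BerghLofstrom} upgrades these dual bounds to $|Q(u,Bw)| \leq \|u\|_{H^{1/2}_{T}} \|w\|_{H^{1/2}_{T}}$, still with constant $\leq 1$; taking $w = u$ closes the argument. The conceptual ingredients---symmetry of the boundary form and its frequency localization---are provided by the earlier lemmas, and bilinear interpolation is standard; the main obstacle is the delicate arithmetic needed to secure the sharp constant $1$ in the multiplier bound $\|\vd B\|_{L^{2} \to L^{2}} \leq 1$, in particular the verification of the odd-index base case $b_{m,1}^{2}\lambda_{m+1} \leq 1$ alongside the even one.
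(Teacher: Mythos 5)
Your proposal is correct, and its skeleton is the same as the paper's: both arguments reduce the lemma to the bound $\norm{Tu}_{L^2}\leq\norm{\ehdp u}_{L^2}$ (Remark \ref{rem:Tbound}), the multiplier bound $\lambda_{m+k}b_{m,k}^2\leq 1$ (equivalently $\norm{\vd B w}_{L^2}\leq\norm{w}_{L^2}$), the self-adjointness of $\vdiv T$ (Lemma \ref{lemma:Qstar}) together with its frequency localization (Lemma \ref{lemma_curvature_fourier_localization}), and complex interpolation. The two places where you genuinely diverge are worth noting. First, you interpolate the bilinear form $Q(u,Bw)$ between the mixed bounds $H^1_T\times L^2$ and $L^2\times H^1_T$ directly, whereas the paper interpolates the operator $B\vdiv T$ between $H^1_T\to L^2$ and $L^2\to H^{-1}_T$ and then pairs with $u$; your route avoids introducing $H^{-1}_T$ and $H^{-1/2}_T$ and the duality-interpolation identification the paper borrows from \cite[Corollary 4.5.2]{BerghLofstrom}, at the cost of invoking the bilinear interpolation theorem (and your symmetry step $Q(u,Bw)=Q(w,Bu)$ is exactly the paper's use of $\vdiv T=-T^*\vd$ in disguise). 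Second, and more substantively, your proof of $\lambda_{m+k}b_{m,k}^2\leq 1$ is different and self-contained: your ratio identity is correct (with $\alpha_l=\frac{(2l+d-2)(l+d-1)}{l+d-2}$ and $\beta_l=\frac{(2l+d-2)(l-1)}{l}$ one indeed gets $b_{m,k+2}/b_{m,k}=\beta_L/\alpha_L$, $L=m+k+1$, and the displayed two-step ratio, which is $\leq 1$ precisely because $(L+d-2)^2\geq L^2$ for $d\geq 2$), so the sequence $k\mapsto b_{m,k}^2\lambda_{m+k}$ is non-increasing in steps of two and everything reduces to $\lambda_m\leq\alpha_{m-1}^2$ and $\lambda_{m+1}\leq\alpha_m^2$, which hold for $d\geq 2$. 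The paper instead bounds $\gamma_{d,m,j}\leq c_d$ using the constants $C_d$ imported from \cite{PSU_hd} and then checks $c_d^2\,\lambda_k/\alpha_{k-1}^2\leq 1$ by a case analysis in $d$; your monotonicity argument dispenses with the imported constants and the dimension-splitting, which is arguably cleaner, while the paper's computation yields the stronger quantitative information $\lambda_k/\alpha_{k-1}^2\leq\frac{1}{3}$. Do spell out the two base cases explicitly when you write this up (they amount to $m(m+d-2)(m+d-3)^2\leq(2m+d-4)^2(m+d-2)^2$ for $m\geq 2$, plus $\lambda_1/\alpha_0^2=\frac{1}{d-1}\leq 1$), since the constant $1$ in the lemma is exactly what is at stake.
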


\begin{proof} We may write
\[(Tu,\vd Bu)=-(B\vdiv T u,u).\]
By the definitions, it suffices to show that
\[\norm{B\vdiv Tu}_{H^{-1/2}_{T}} \leq \norm{u}_{H^{1/2}_{T}}.\]
By interpolation, this follows from the next two inequalities
\begin{equation}
\norm{B\vdiv Tu}_{L^2} \leq \norm{u}_{H^{1}_{T}},
\label{eq:01}
\end{equation}
\begin{equation}
\norm{B\vdiv Tu}_{H^{-1}_{T}} \leq \norm{u}_{L^2}.
\label{eq:-10}
\end{equation}

To prove these estimates we first establish the property
\begin{equation}
\norm{\vd Bu}_{L^{2}} \leq \norm{u}_{L^{2}}.
\label{eq:zeroorder}
\end{equation}
Indeed using the definition of $B$,
\begin{align*}
\norm{\vd Bu}_{L^{2}}^{2}&=(Bu,\Delta Bu)=\sum_{l=0}^{\infty} \lambda_{m+l} b_{m,l}^2 \norm{u_{m+l}}^{2}_{L^{2}}.
\end{align*}
To prove \eqref{eq:zeroorder} we will show that $\lambda_{m+l} b_{m,l}^2 \leq 1$ for $m \geq 1$ and $l \geq 0$. If $m=1$ and $l=0$, then $\lambda_1 b_{1,0}^2 = \frac{\lambda_1}{\alpha_0^2} = \frac{1}{d-1} \leq 1$, so we may assume $m, l \geq 1$. One has $\gamma_{d,m,j} \leq c_d$, which gives $\lambda_{m+l} b_{m,l}^2 \leq c_d^2 \frac{\lambda_{m+l}}{\alpha_{m-1+l}^2}$. Observe that simplifying \eqref{alphal_formula} gives $\alpha_l = \frac{(2l+d-2)(l+d-1)}{l+d-2}$ for all $l \geq 1$. We thus have, writing $k=m+l \geq 2$,  
\begin{align*}
\frac{\lambda_k}{\alpha_{k-1}^2} &= \frac{k(k+d-2)(k+d-3)^2}{(2k+d-4)^2(k+d-2)^2} \\
 &= \frac{1}{4} \frac{k^3 + (2d-6)k^2 + (d-3)^2 k}{(k^2 + (d-4)k + (\frac{d-4}{2})^2)(k+d-2)} \\
 &= \frac{1}{4} \frac{k^3 + (2d-6)k^2 + (d^2 - 6d + 9) k}{k^3 + (2d-6)k^2 + (d^2 - 6d + 8 + (\frac{d-4}{2})^2)k + (d-2) (\frac{d-4}{2})^2}.
\end{align*}
Thus if $d = 2$ or $d \geq 6$, one has $(\frac{d-4}{2})^2 \geq 1$ and hence $\frac{\lambda_k}{\alpha_{k-1}^2} \leq \frac{1}{4}$. If $d=3,4,5$ we estimate 
\begin{align*}
\frac{\lambda_k}{\alpha_{k-1}^2} &\leq \frac{1}{4} \frac{k^3 + (2d-6)k^2 + (d^2 - 6d + 9) k}{k^3 + (2d-6)k^2 + (d^2 - 6d + 8 )k} \leq \frac{1}{4} \left[ 1 + \frac{1}{(k+d-3)^2-1} \right] \leq \frac{1}{3}
\end{align*}
using $k \geq 2$. Combining these estimates with \eqref{cd_estimate}, we have 
\[
\lambda_{m+l} b_{m,l}^2 \leq c_d^2 \frac{\lambda_{m+l}}{\alpha_{m-1+l}^2} \leq 1.
\]
The estimate \eqref{eq:zeroorder} follows. Since $-\vd B$ is the adjoint of $B\vdiv$, using \eqref{eq:zeroorder} and \eqref{T_ltwo_bound} yields
\[\norm{B\vdiv Tu}_{L^2} \leq \norm{Tu}_{L^{2}} \leq \norm{u}_{H^{1}_{T}}\]
thus proving \eqref{eq:01}.

Finally to prove \eqref{eq:-10}, we note that $B\vdiv T = \vdiv TB$ by Lemma \ref{lemma_curvature_fourier_localization}. Using Lemma \ref{lemma:Qstar} we may write
\begin{align*}
\norm{B\vdiv Tu}_{H^{-1}_{T}}&=\sup_{\norm{h}_{H^{1}_{T}}=1}(\vdiv TBu,h)\\
&=\sup_{\norm{h}_{H^{1}_{T}}=1}-(T^*\vd Bu,h)\\
&=\sup_{\norm{h}_{H^{1}_{T}}=1}-(\vd Bu,Th)\\
&\leq \sup_{\norm{h}_{H^{1}_{T}}=1}\norm{u}_{L^{2}}\norm{Th}_{L^{2}}\\
&\leq \norm{u}_{L^{2}},
\end{align*}
where in the penultimate line we used \eqref{eq:zeroorder} and \eqref{T_ltwo_bound}.
\end{proof}

Theorem \ref{thm_stability_transport} for $m \geq 1$ now follows from \eqref{fminusq_second} and Lemma \ref{lemma:h1/2}. When $m=0$, it follows from \eqref{eq:fmzero} and Lemma \ref{lemma:h1/2}.

\section{Stability for the solenoidal part}

We now rewrite Theorem \ref{thm_stability_transport} in terms of the solenoidal part of the tensor inducing $f$ and extend the result to $H^1$ regularity. Recall that the map
$$\ell_m: C^{\infty}(S^m(T^*M))\to \bigoplus_{k=0}^{[m/2]}\Omega_{m-2k},$$ 
is an isomorphism and it gives a natural identification between functions in $\Omega_m$ and {\it trace-free} symmetric $m$-tensors (for details on this see \cite{GK2,DS,PSU_hd}). The identification actually holds pointwise for every $x\in M$. Moreover, the $L^2$-norms on trace free symmetric $m$-tensors and functions in $\Omega_{m}$ are the same up to a constant depending only on $d$ and $m$ (cf.\ \cite[Lemma 2.4]{DS}).

Given a tensor $\tilde{f}\in H^{1}(S^m(T^*M))$, we let $f=\ell_{m}\tilde{f}$ (using $\ell_{m}$ on $H^1$ spaces). Note if $m$ is even (resp. odd), then $f$ has zero odd (resp. even) Fourier modes. If $Xu=f$, we let 
$$q:=\sum_{k=0}^{[(m-1)/2]}u_{m-2k-1}.$$
Upon defining $\tilde{q}:=\ell_{m-1}^{-1}q$, the well-known relation $X\ell_{m-1}=\ell_{m}d^{s}$ implies that $\ell_{m}^{-1}(f-Xq)=\tilde{f}-d^{s}\tilde{q}$. To simplify the notation we shall the drop 
the tildes, identify $f$ with $\tilde{f}$, $q$ with $\tilde{q}$ and use that the $L^2$-norms are equivalent.

We first collect regularity properties of solutions of transport equations involving $H^1$ tensor fields.

\begin{Lemma} \label{lemma_uf_regularity}
Let $(M,g)$ be a compact simple manifold. Given $f \in H^1(S^m(T^* M))$, there is $u^f \in H^1(SM)$ satisfying 
\begin{equation} \label{uf_hone_properties}
\text{$Xu^f = -f$ in $SM$, \qquad $u^f|_{\p_- SM} = 0$, \qquad $u^f|_{\p_+ SM} = If$.}
\end{equation}
Moreover, one has $u^f|_{\p SM} \in H^1(\p SM)$ and $If \in H^1_0(\p_+ SM)$.
\end{Lemma}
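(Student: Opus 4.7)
The natural candidate is
$$u^f(x,v) := \int_0^{\tau(x,v)} (\ell_m f)(\varphi_t(x,v))\,dt,$$
and a direct calculation using the cocycle identity $\tau(\varphi_s(x,v))=\tau(x,v)-s$ confirms that $X u^f = -f$ in $SM$, that $u^f$ vanishes on $\partial_- SM$ (since $\tau=0$ there), and that $u^f|_{\partial_+ SM}$ coincides with $If$ by definition. So the existence is essentially free, and the task reduces entirely to the three regularity assertions. The plan is to verify these first for smooth $f$ and then extend by density using mapping properties of the X-ray transform on simple manifolds already available in the literature.

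For $u^f\in H^1(SM)$ and $If=u^f|_{\partial_+ SM}\in H^1(\partial_+ SM)$, I would invoke the standard result of Sharafutdinov (\cite[Theorem 4.2.1]{Sh} together with the associated interior regularity estimates for $u^f$), asserting that on a compact simple manifold the solution operator $f\mapsto u^f$ is bounded $H^1(S^m T^*M)\to H^1(SM)$ and that $I_m: H^1(S^m T^*M)\to H^1(\partial_+ SM)$ is bounded. These estimates rest on Pestov-type energy inequalities combined with the Lipschitz regularity of $\tau$ on $SM$ that follows from strict convexity and absence of trapping.

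For the boundary trace $u^f|_{\partial SM}\in H^1(\partial SM)$ and the refinement to $H^1_0(\partial_+ SM)$, I would glue the two pieces. Write $\partial SM=\partial_+ SM\cup\partial_- SM$ with common interface $S(\partial M)=\partial(\partial_+SM)$. The $\partial_-$ piece is identically zero, and the $\partial_+$ piece is $If\in H^1(\partial_+ SM)$. To combine them into a single function in $H^1(\partial SM)$ it suffices to check that $If$ has zero trace on $S(\partial M)$; for smooth $f$ this is immediate from $\tau|_{S(\partial M)}=0$, which makes the defining integral vanish pointwise. Density via the bounded map $I_m: H^1\to H^1$ then extends the vanishing-trace property to $f\in H^1$, which is exactly the statement $If\in H^1_0(\partial_+ SM)$. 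The main obstacle here is the glancing set: $\tau$ is only H\"older continuous at $S(\partial M)$ (it behaves like the square root of the distance to the glancing set), so naive differentiation of the formula for $u^f$ does not give control up to $\partial SM$. The strength of the cited Sharafutdinov estimates lies precisely in handling this via energy identities adapted to the strict convexity of $\partial M$, which is the reason the simple-manifold hypothesis is used.
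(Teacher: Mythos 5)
Your proposal follows essentially the same route as the paper: the explicit formula for $u^f$, verification of \eqref{uf_hone_properties} for smooth $f$, appeal to Sharafutdinov's mapping properties ($f\mapsto u^f$ bounded $H^1(S^m(T^*M))\to H^1(SM)$, from \cite[Corollary 1]{Sh95}, and $I$ bounded $H^1\to H^1(\partial_+SM)$ from \cite[Theorem 4.2.1]{Sh}), followed by density, the vanishing of $If$ on $S(\partial M)$ giving $If\in H^1_0(\partial_+SM)$, and gluing with zero on $\partial_-SM$ to get $u^f|_{\partial SM}\in H^1(\partial SM)$. One small correction of mechanism only: $\tau$ is not Lipschitz on $SM$ (as you yourself note later, it degenerates like a square root at the glancing set); the cited estimates rest instead on the uniform boundedness of $\vd\tau$ and of the tangential horizontal derivatives of $\tau$ on $SM\setminus S(\partial M)$.
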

\begin{proof}
If $f \in C^{\infty}(S^m(T^*M))$, define $u^f$ on $SM$ by 
\[
u^f(x,v) = \int_0^{\tau(x,v)} f(\varphi_t(x,v)) \,dt
\]
where $\varphi_t$ is the geodesic flow on $SM$. One has $u^f \in C^{\infty}(SM \setminus S(\p M)) \cap C(SM)$ since the same is true for $\tau$, and \eqref{uf_hone_properties} holds for $u^f$. By \cite[Corollary 1]{Sh95}, the map $f \mapsto u^f$ extends as a bounded map $H^1(S^m(T^*M)) \to H^1(SM)$. (This boils down to the fact that $\vd \tau$ and $\ehdp \tau$, where the operator $\ehdp$ is extended smoothly to $SM$, are uniformly bounded on $SM \setminus S(\p M)$, see \cite[Lemma 4.1.3]{Sh} and \cite[Lemma 5.1]{DPSU07}.) Moreover, by \cite[Theorem 4.2.1]{Sh} the map $f \mapsto If$ extends as a bounded map $H^1(S^m(T^*M)) \to H^1(\p_+ SM)$. Then the properties \eqref{uf_hone_properties} remain valid for $f \in H^1$ (the boundary value of $u^f$ is in $H^{1/2}(\p SM)$ by the trace theorem). Since $If$ vanishes on the boundary of $\p_+ SM$ when $f \in C^{\infty}$, one has $If \in H^1_0(\p_+ SM)$ first for $f \in C^{\infty}$ and then for $f \in H^1$ by density. Since $u^f|_{\p SM} = E_0 (If)$ where $E_{0}$ denotes extension by zero from $\p_+ SM$ to $\p SM$, we have $u^f|_{\p SM} \in H^1(\p SM)$ when $f \in H^1$.
\end{proof}

Next we give a version of Theorem \ref{thm_stability_transport} for $H^1$ tensor fields. In its statement $q$ is determined by $u^f$ as described above.

\begin{Theorem} \label{thm_stability_transport_second}
Let $(M,g)$ be a simply connected compact manifold with strictly convex boundary and
sectional curvature $\leq 0$, and let $f \in H^1(S^m(T^*M))$. If $m=0$, then 
\begin{equation} \label{f_tensor_initial_estimate}
\norm{f}_{L^2(M)} \leq C \norm{u^f}_{H^{1/2}_T(\p SM)}
\end{equation}
whereas if $m \geq 1$, then
\begin{equation} \label{fminusdsq}
\norm{f-d^s q}_{L^2(M)} \leq C \norm{u^f}_{H^{1/2}_T(\p SM)}.
\end{equation}
Here $C$ only depends on $d$ and $m$.
\end{Theorem}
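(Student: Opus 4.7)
The plan is to deduce Theorem~\ref{thm_stability_transport_second} from Theorem~\ref{thm_stability_transport} via the isomorphism $\ell_m$, after upgrading Theorem~\ref{thm_stability_transport} from $C^\infty(SM)$ to $H^1(SM)$ by a density argument.

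\emph{Step 1 (Setup via $\ell_m$).} Apply Lemma~\ref{lemma_uf_regularity} to obtain $u^f \in H^1(SM)$ with $Xu^f = -\ell_m f$, $u^f|_{\p_-SM}=0$, and $u^f|_{\p SM} \in H^1(\p SM)$. Define on the function side $q := \sum_{k=0}^{[(m-1)/2]} u^f_{m-2k-1} \in L^2(SM)$, so that $\tilde q := \ell_{m-1}^{-1} q \in L^2(S^{m-1}(T^*M))$. The intertwining relation $X\ell_{m-1}=\ell_m d^s$ and the $L^2$-equivalence between trace-free symmetric tensors and their $\ell_m$-images (with constants depending only on $d$ and $m$) show that establishing
\[
\|Xu^f + Xq\|_{L^2(SM)} \leq C\,\|u^f\|_{H^{1/2}_T(\p SM)} \qquad (\star)
\]
implies \eqref{fminusdsq} (and \eqref{f_tensor_initial_estimate} when $m=0$, with $q=0$).

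\emph{Step 2 (Upgrade Theorem~\ref{thm_stability_transport} to $H^1$).} Inequality $(\star)$ is exactly Theorem~\ref{thm_stability_transport} applied to $u=-u^f$, except that $u^f$ is only $H^1(SM)$, not $C^\infty(SM)$. I would verify that the conclusion of Theorem~\ref{thm_stability_transport} persists for $u \in H^1(SM)$ with $Xu$ of finite degree $m$ and boundary trace in $H^{1/2}_T(\p SM)$. The proof of Theorem~\ref{thm_stability_transport} uses only (i) the frequency-localized Pestov identity on $\Omega_l$ (Proposition~\ref{prop_pestov_localized}); (ii) the identities $X_+u_l+X_-u_{l+2}=0$ for $l\geq m$, which follow from $Xu$ having degree $\leq m$; (iii) the $L^2$-decay $\|X_-u_l\|\to 0$, which holds because $\sum_l\|u_l\|_{H^1}^2<\infty$; and (iv) the boundary bound of Lemma~\ref{lemma:h1/2}, which applies since $u^f|_{\p SM}\in H^1(\p SM) \hookrightarrow H^{1/2}_T(\p SM)$. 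Each of (i)--(iv) extends from smooth to $H^1$ functions by standard density, projecting onto $\Omega_l$ via the fibrewise spherical harmonic projector $\Pi_l$ (bounded on every Sobolev scale). The iteration and series summation in the proof of Theorem~\ref{thm_stability_transport} then go through verbatim, yielding $(\star)$ with a universal constant on the right (the factor $C$ in the tensor statement comes only from Step~1).

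\emph{Main obstacle.} The crux is justifying (i) for $u_l \in H^1(SM)\cap\Omega_l$: one must check that every term in the identity is continuous under $H^1$-approximation. The interior terms $\|X_\pm u_l\|^2$, $(R\vd u_l,\vd u_l)$, $\|Z(u_l)\|^2$ are visibly $L^2$-continuous. The boundary pairing $P(u_l,u_l)=(Tu_l,\vd u_l)_{L^2(\p SM)}$ is more delicate: since $u_l|_{\p SM} \in H^{1/2}(\p SM)$, both $Tu_l$ and $\vd u_l|_{\p SM}$ live in suitable $H^{-1/2}$-type spaces on $\p SM$, and their pairing is well-defined by duality with the tangential structure built into $T$ (see Lemma~\ref{lma:Q} and Remark~\ref{rem:Tbound}). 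Combined with the summability of the coefficients $b_{m,l}$ from Lemma~\ref{lemma:h1/2}, this is enough to pass to the $H^1$-limit in all series, delivering $(\star)$ and hence the theorem.
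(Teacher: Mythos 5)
Your Step 1 is exactly the paper's reduction, but your Step 2 is not how the paper proceeds, and as written it has a genuine gap. The paper never re-derives the transport estimate of Theorem \ref{thm_stability_transport} at $H^1(SM)$ regularity. Instead it extracts from that proof the single final inequality $\norm{X_-u_m}^2+\norm{X_-u_{m+1}}^2\le \norm{u}^2_{H^{1/2}_T(\p SM)}$ (valid for smooth $u$ whose transport image has degree $\le m$) and extends \emph{only this inequality} to $u^f\in H^1(SM)$ by density: the left-hand side involves just two fibrewise projections composed with first-order operators, hence is continuous under $H^1(SM)$-convergence, and the right-hand side is continuous by the trace theorem $H^1(SM)\to H^{1/2}(\p SM)\hookrightarrow H^{1/2}_T(\p SM)$. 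Consequently no low-regularity version of the localized Pestov identity, of the boundary pairing $P$, of the decay $\norm{X_-u_l}\to 0$, or of the infinite resummation is ever needed.

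The gap in your route is precisely at the point you identify as the crux. You claim that for $u_l\in H^1(SM)\cap\Omega_l$ the pairing $P(u_l,u_l)=(Tu_l,\vd u_l)_{\p SM}$ is ``well-defined by duality'' because both $Tu_l$ and $\vd u_l|_{\p SM}$ lie in $H^{-1/2}$-type spaces; but there is no duality pairing between two $H^{-1/2}$ spaces, so this justification fails as stated. (It can be repaired, e.g.\ by noting that Lemma \ref{lemma_uf_regularity} actually gives $u^f|_{\p SM}=E_0(If)\in H^1(\p SM)$, making $Tu_l$ and $\vd u_l$ genuinely $L^2$ on $\p SM$, or by observing that $\vd$ preserves $\Omega_l$ and its horizontal regularity so that one has an honest $H^{-1/2}\times H^{1/2}$ pairing for fixed $l$ — but neither argument appears in your proposal, and one would still have to justify passing to the limit in the \emph{identity}, not just in the pairing.) Two further asserted steps are nontrivial and unproved: (iii), since $\sum_l\norm{u_l}^2_{H^1(SM)}<\infty$ does not follow from $u\in H^1(SM)$ by ``standard density'' — the $H^1(SM)$-norm does not localize over vertical frequencies because of the cross terms $(X_+u_{l-1},X_-u_{l+1})$ — and the resummation $\sum_k b_{m,k}P(u_{m+k},u_{m+k})=(Tu,\vd Bu)$ at trace regularity, which in the paper is only established for smooth functions via Lemma \ref{lemma_curvature_fourier_localization}. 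All of these difficulties are avoidable by the paper's shortcut of extending only the final inequality; also note, as a minor point, that your $(\star)$ is not literally Theorem \ref{thm_stability_transport} (which controls $f-Xr$ with $r=u_0+\cdots+u_{m-1}$, not $f-Xq$), and one should add the parity/orthogonality remark giving $\norm{f-Xq}\le\norm{f-Xr}$.
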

\begin{proof}
Let $m \geq 1$ (the case $m=0$ is analogous). Going back to \eqref{xminusum_estimate} and using Lemma \ref{lemma:h1/2}, one has the inequality 
\[
\norm{X_- u_m}^2 + \norm{X_- u_{m+1}}^2 \leq \norm{u}_{H^{1/2}_T(\p SM)}^2, \qquad u \in C^{\infty}(SM).
\]
Since functions in $H^1(SM)$ have traces in $H^{1/2}(\p SM)$, and hence also in $H^{1/2}_T(\p SM)$, the above inequality holds for $u \in H^1(SM)$ by density. Then it is enough to take $u = u^f$, where $u^f \in H^1(SM)$ by Lemma \ref{lemma_uf_regularity}, and to note that by \eqref{f_xq_decomposition} and by equivalence of the $L^2$ norms 
\[
\norm{X_- u_m}^2 + \norm{X_- u_{m+1}}^2 = \norm{f-Xr}_{L^2(SM)}^2 \geq c(d,m) \norm{f-d^s q}_{L^2(M)}^2. \qedhere
\]
\end{proof}

The estimate \eqref{f_tensor_initial_estimate} for $m=0$ is already in the form that we want, so we will focus on the case $m \geq 1$. Using the potential and solenoidal decomposition, we may write
$f=f_{s}+d^s p$ where $\delta^{s}f_{s}=0$ and $p$ is an $(m-1)$-tensor such that $p|_{\partial M}=0$.
Let $w=p-q$. Then integrating by parts
\begin{align}
\norm{f-d^{s}q}^{2}&=\norm{f_{s}+d^{s} w}^{2} \label{fminusdsq_estimate} \\
&=\norm{f_{s}}^{2}+2(f_{s},d^{s}w)+\norm{d^{s}w}^{2} \notag \\
&=\norm{f_{s}}^{2}+2(\iota_{\nu}f_{s},w)_{\partial M}+\norm{d^{s}w}^{2} \notag \\
&\geq \norm{f_{s}}^{2}-2 \abs{(\iota_{\nu}f_{s},q)_{\partial M}}. \notag
\end{align}
Next we observe that for any $\varepsilon>0$
\begin{equation} \label{iotanufs_estimate}
2 \abs{(\iota_{\nu}f_{s},q)_{\partial M}} \leq \frac{1}{\varepsilon}\norm{q}^{2}_{H^{1/2}(\partial M)}+\varepsilon\norm{\iota_{\nu}f_{s}}^{2}_{H^{-1/2}(\partial M)}.
\end{equation}

We now claim:

\begin{Lemma} \label{lemma:parts}
We have
\[\norm{\iota_{\nu}f_{s}}_{H^{-1/2}(\partial M)}\lesssim \norm{f_{s}}_{L^{2}(M)}.\]
\end{Lemma}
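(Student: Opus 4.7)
The strategy is to dualize the $H^{-1/2}$ norm against $H^{1/2}$ test tensors on $\partial M$, extend them into $M$ with a bounded extension operator, and use the solenoidal condition $\delta^{s}f_{s}=0$ to convert the boundary pairing into an interior $L^{2}$-pairing. By duality,
\[
\norm{\iota_{\nu}f_{s}}_{H^{-1/2}(\partial M)} \;=\; \sup\bigl\{ |(\iota_{\nu}f_{s},\psi)_{\partial M}| : \psi \in H^{1/2}(S^{m-1}(T^*\partial M)),\ \norm{\psi}_{H^{1/2}(\partial M)} \leq 1\bigr\},
\]
so it is enough to bound $|(\iota_{\nu}f_{s},\psi)_{\partial M}|$ by $\norm{f_{s}}_{L^{2}(M)}$ uniformly over such $\psi$.

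Given a test tensor $\psi$, the plan is to invoke the standard extension theorem and produce a symmetric $(m-1)$-tensor field $\Psi \in H^{1}(S^{m-1}(T^*M))$ with $\Psi|_{\partial M} = \psi$ and $\norm{\Psi}_{H^{1}(M)} \lesssim \norm{\psi}_{H^{1/2}(\partial M)}$. The constant here depends only on $(M,g)$ and $m$ (the extension operator for scalar functions lifts to symmetric tensors componentwise in local frames). Then I use the integration-by-parts formula that expresses $-\delta^{s}$ as the formal adjoint of $d^{s}$, namely
\[
(d^{s}\Psi, f_{s})_{L^{2}(M)} \;=\; -(\Psi, \delta^{s} f_{s})_{L^{2}(M)} + (\psi, \iota_{\nu} f_{s})_{L^{2}(\partial M)}.
\]
Because $\delta^{s}f_{s}=0$, the first term on the right drops out and we are left with $(\iota_{\nu}f_{s},\psi)_{\partial M} = (f_{s}, d^{s}\Psi)_{L^{2}(M)}$. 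In particular this identity, read for an arbitrary $H^{1}$-extension $\Psi$, is also what gives a meaning to the normal trace $\iota_{\nu}f_{s}$ as an element of $H^{-1/2}(\partial M)$ in the first place, even though $f_{s}$ is only $L^{2}$; independence of the choice of extension is exactly the solenoidal condition.

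Finishing is then a one-line application of Cauchy--Schwarz together with the basic pointwise bound $|d^{s}\Psi| \lesssim |\nabla \Psi|$, which yields $\norm{d^{s}\Psi}_{L^{2}(M)} \lesssim \norm{\Psi}_{H^{1}(M)} \lesssim \norm{\psi}_{H^{1/2}(\partial M)}$. Putting it together,
\[
|(\iota_{\nu}f_{s},\psi)_{\partial M}| \;=\; |(f_{s}, d^{s}\Psi)_{L^{2}(M)}| \;\leq\; \norm{f_{s}}_{L^{2}(M)} \norm{d^{s}\Psi}_{L^{2}(M)} \;\lesssim\; \norm{f_{s}}_{L^{2}(M)} \norm{\psi}_{H^{1/2}(\partial M)},
\]
and taking the supremum over $\psi$ delivers the claim. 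I do not anticipate a substantive obstacle: the only delicate point is making the normal trace meaningful for an $L^{2}$ solenoidal tensor, which is handled exactly by reading the integration-by-parts identity as the definition, in complete analogy with the well-known normal trace of a divergence-free $L^{2}$ vector field.
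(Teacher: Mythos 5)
Your proposal is correct and is essentially identical to the paper's proof: both dualize the $H^{-1/2}(\partial M)$ norm, extend the test tensor to an $H^{1}(M)$ symmetric $(m-1)$-tensor via a bounded extension operator, integrate by parts using that $\delta^{s}$ is the adjoint of $-d^{s}$, kill the interior term with $\delta^{s}f_{s}=0$, and finish with Cauchy--Schwarz. Your extra remark that the identity also serves to define the normal trace of an $L^{2}$ solenoidal tensor is a correct and welcome clarification, though the paper leaves it implicit.
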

\begin{proof}
This is a duality argument, but it is important that $\delta^{s}f_{s}=0$. Consider a bounded extension map for symmetric $(m-1)$-tensors, $e:H^{1/2}(\partial M)\to H^{1}(M)$ (such a map can be constructed from a corresponding extension map for functions by working in local coordinates and using a partition of unity). Now write
\begin{align*}
\norm{\iota_{\nu}f_{s}}_{H^{-1/2}(\partial M)}&=\sup_{\norm{h}_{H^{1/2}(\partial M)}=1}\int_{\partial M}\langle \iota_{\nu}f_{s},h\rangle\,dS\\
&=\sup_{\norm{h}_{H^{1/2}(\partial M)}=1}(-(\delta^{s}f_{s},e(h))+(f_{s},d^{s}e(h)))\\
&=\sup_{\norm{h}_{H^{1/2}(\partial M)}=1}(f_{s},d^{s}e(h))\\
&\lesssim \norm{f_{s}}_{L^2(M)}. \qedhere
\end{align*}
\end{proof}

Combining \eqref{fminusdsq}--\eqref{iotanufs_estimate} with Lemma \ref{lemma:parts} and choosing $\eps$ small enough, it follows that 
\begin{equation} \label{fs_initial_estimate}
\norm{f_s}_{L^2(M)}^2 \lesssim \norm{u^f}_{H^{1/2}_T(\p SM)}^2 + \norm{q}_{H^{1/2}(\partial M)}^2.
\end{equation}
The next two lemmas will be useful when estimating the last term on the right.

\begin{Lemma} \label{lemma:aux}
Given $m\geq 0$, there is a constant $C>0$ such that for any tensor $q$ of order $m$
\[\norm{q}_{H^{1/2}(\partial M)}\leq C \norm{\ell_{m} q}_{H_{T}^{1/2}(\partial SM)}.\]
\end{Lemma}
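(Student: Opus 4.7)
The plan is to establish the estimate at the integer scales $s=0$ and $s=1$ by pointwise-in-$x$ arguments on $\partial M$, and then transfer to $s=1/2$ by complex interpolation applied to a bounded left inverse of $\ell_m$.

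For the $L^2$ estimate, the pointwise map $\ell_m : S^m(T_x^* M) \to L^2(S_x M)$ is an injective linear map with image in the finite-dimensional subspace $\bigoplus_{k=0}^{[m/2]}(\Omega_{m-2k})_x$, and since the fibre geometry is universal there is a constant $c_{d,m}>0$ (depending only on $d$ and $m$) with
\[
\abs{q_x}^2 \leq c_{d,m}\int_{S_xM}\abs{\ell_m q(x,v)}^2\,dS_x(v).
\]
Integrating over $\partial M$ yields $\norm{q}_{L^2(\partial M)}\leq C\,\norm{\ell_m q}_{L^2(\partial SM)}$.

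For the $H^1$ estimate, the key identity is that tangential horizontal differentiation of $\ell_m q$ equals $\ell_m$ of the corresponding covariant derivative of $q$. Indeed, parallel-transporting $v$ along a curve $\gamma$ in $\partial M$ with velocity $w\in T_x\partial M$ and differentiating $\ell_m q(\gamma(t),V(t))$ at $t=0$ gives
\[
\langle \ehdp\ell_m q,w\rangle(x,v) = (\nabla_w q)_x(v,\dots,v) = \ell_m(\nabla_w q)(x,v).
\]
Applying the $s=0$ estimate to $\nabla_{e_a}q$ for a local orthonormal frame $\{e_a\}_{a=1}^{d-1}$ of $T\partial M$, summing, and integrating over $\partial M$ produces $\norm{\nabla^{\partial M}q}_{L^2(\partial M)}\leq C\norm{\ehdp\ell_m q}_{L^2(\partial SM)}$, and hence $\norm{q}_{H^1(\partial M)}\leq C\norm{\ell_m q}_{H^1_T(\partial SM)}$.

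To interpolate, define $R : L^2(\partial SM)\to L^2(\partial M, S^m(T^*M))$ by composing the fibrewise orthogonal projection $P$ onto the image of $\ell_m$ (namely $\bigoplus_{k=0}^{[m/2]}\Omega_{m-2k}$) with the fibrewise linear inverse of $\ell_m$ on that image. Then $R\circ\ell_m = \id$. By the two estimates above, applied to $Pu$ in place of $\ell_m q$ in the second one, $R$ is bounded $L^2(\partial SM)\to L^2(\partial M)$ and $H^1_T(\partial SM)\to H^1(\partial M)$. Complex interpolation then gives $R : H^{1/2}_T(\partial SM)\to H^{1/2}(\partial M)$ boundedly, and the lemma follows from the identity $\norm{q}_{H^{1/2}(\partial M)} = \norm{R\ell_m q}_{H^{1/2}(\partial M)}$.

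The main technical point is verifying that the vertical spherical harmonic projection $P$ commutes with the tangential horizontal derivative $\ehdp$, so that $R$ is indeed bounded on the $H^1_T$ scale. This reflects the general fact that horizontal differentiation corresponds to parallel transport along a base curve, which is an isometry of the fibre and hence intertwines the fibrewise spherical harmonic decompositions.
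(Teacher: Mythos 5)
Your proposal is correct, and its skeleton---endpoint estimates at the $L^2$ and $H^1$ levels followed by complex interpolation---is the same as the paper's. The endpoint bounds are obtained in essentially the same way: the paper works in local coordinates and notes that $\ehdp \ell_m q$ contains all tangential derivatives of the components of $q$ up to Christoffel-type terms controlled in $L^2$, while you use the equivalent intrinsic identity $\langle \ehdp \ell_m q, w\rangle = \ell_m(\nabla_w q)$ for $w$ tangent to $\p M$ (a small caveat: $\nabla_{e_a} q$ here is the ambient Levi-Civita derivative, not the intrinsic connection of $\p M$ as your notation $\nabla^{\p M}q$ suggests; the discrepancy is zeroth order and harmless). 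Where you genuinely differ is in how the interpolation is justified: the paper interpolates $\ell_m^{-1}$ directly on the subspace $\bigoplus_{k}\Omega_{m-2k}$, somewhat tersely, whereas you build a globally defined left inverse $R=\ell_m^{-1}\circ P$ and interpolate that. This is cleaner, but it requires the fibrewise spherical-harmonic projection $P$ to be bounded on $H^1_T(\p SM)$, and that is not free: it is precisely the content of the paper's Lemma \ref{lemma:fbound} (frequency localization of the $H^1_T(\p SM)$ norm), which the paper proves by an extension argument along the normal flow. Your one-line justification---horizontal lifts of vector fields tangent to $\p M$ are tangent to $\p SM$ and commute with the vertical Laplacian, since horizontal transport is a fibre isometry---is exactly observation (1) at the start of that proof, and it does give $\mathbb{E}_a(Pu)=P(\mathbb{E}_a u)$ for a tangential orthonormal frame $\{e_a\}$, hence $\norm{\ehdp Pu}_{L^2(\p SM)}\leq \norm{\ehdp u}_{L^2(\p SM)}$. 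So your route is sound; the only thing to flag is that this projection bound is a lemma in its own right (the paper needs it anyway for Theorem \ref{thm:main0}), so you should write out the commutation $\Delta\mathbb{W}=\mathbb{W}\Delta$ for horizontal lifts and the resulting localization of the $\ehdp$-norm explicitly rather than asserting it.
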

\begin{proof} Recall that we identify symmetric $m$-tensors with functions in $\bigoplus_{k=0}^{[m/2]}\Omega_{m-2k}$ via $\ell_m$ as explained at the beginning of this section. By interpolation, it is enough to show that $\norm{\ell_{m}^{-1} h}_{L^2}\lesssim \norm{h}_{L^2}$ and 
$\norm{\ell_{m}^{-1} h}_{H^1}\lesssim \norm{h}_{H^{1}_{T}}$ when $h \in \bigoplus_{k=0}^{[m/2]}\Omega_{m-2k}$.  The first inequality follows from the equivalence of the $L^{2}$-norms. For the second inequality, observe that locally a symmetric $m$-tensor field can be written as $q=q_{i_{1}\dots i_{m}}dx^{i_{1}}\otimes\dots\otimes dx^{i_{m}}$. The $H^{1}$-norm of $q$ in $\partial M$ consists of the $L^2$-norm of $q$ plus the $L^{2}$ norm
of the components $q_{i_{1}\dots i_{m}}(x)$ {\it tangentially} to $M$. Locally $\ell_{m}q$ has the form
$q_{i_{1}\dots i_{m}}v^{i_{1}}\dots v^{i_{m}}$. When we apply $\ehdp$ to $\ell_{m}q$ all the tangential derivatives in the direction of $\partial M$ will appear. There will also be some vertical derivatives (involving the Christoffel symbols), but since $\ell_{m}q$ is a polynomial of degree $m$ in $v$, these terms can all be controlled by the $L^{2}$-norm of $\ell_{m}q$.
Thus $\norm{q}_{H^1}\lesssim \norm{\ell_{m} q}_{H^{1}_{T}}$ follows, and this may be rewritten as $\norm{\ell_{m}^{-1} h}_{H^1}\lesssim \norm{h}_{H^{1}_{T}}$.
\end{proof}

\begin{Lemma}[The $H^1_T(\p SM)$ norm localizes in frequency] One has 
\[
\norm{u}_{H^1_T(\partial SM)}^2 = \sum_{m=0}^{\infty} \norm{u_m}_{H^1_T(\partial SM)}^2
\]
for all $u\in H^1_T(\partial SM)$. In particular, $ \norm{\sum_{l=0}^m u_l}_{H^1_T(\partial SM)} \leq \norm{u}_{H^1_T(\partial SM)}$ when $m \geq 0$.
\label{lemma:fbound}
\end{Lemma}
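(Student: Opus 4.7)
The plan is to reduce frequency localization of $\|u\|^2_{H^1_T} = \|u\|^2_{L^2(\partial SM)} + \|\ehdp u\|^2_{L^2(\partial SM)}$ to the fibrewise orthogonality of spherical harmonics. The $L^2$ part localizes trivially, since for each $x \in \partial M$ the components $u_m(x,\cdot)$ lie in $H_m(S_x M)$ and these spaces are orthogonal on the fibre. So the real task is to prove
\[
(\ehdp u_m, \ehdp u_l)_{L^2(\partial SM)} = 0 \quad \text{whenever } m \neq l,\ u_m \in \Omega_m,\ u_l \in \Omega_l.
\]

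The crucial geometric fact I would establish is that $\ehdp$ preserves the vertical spherical harmonic degree: if $u \in \Omega_m$ and $e \in T_x(\partial M)$, then $v \mapsto (\ehdp u)(x,v) \cdot e$ lies in $H_m(S_x M)$. I would prove this via parallel transport. Pick a curve $\gamma \subset \partial M$ with $\gamma(0)=x$ and $\dot\gamma(0)=e$, and let $P_t \colon S_x M \to S_{\gamma(t)} M$ denote parallel transport. Unravelling the definition of the Sasaki horizontal subbundle one has
\[
(\ehd u)(x,v) \cdot e = \frac{d}{dt}\bigg|_{t=0} u(\gamma(t), P_t v).
\]
Since $P_t$ is a fibrewise isometry and $u(\gamma(t),\cdot) \in H_m(S_{\gamma(t)} M)$ for every $t$, expanding in a fixed basis $\{Y_i\}$ of $H_m(S_x M)$ yields $u(\gamma(t), P_t v) = \sum_i c_i(t) Y_i(v)$, and differentiating in $t$ stays inside $H_m(S_x M)$. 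For $e$ tangent to $\partial M$ one has $\ehdp u \cdot e = \ehd u \cdot e$, so the same conclusion holds for $\ehdp$.

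Given this, I would choose a local orthonormal frame $\{E_j\}_{j=1}^{d-1}$ of $T(\partial M)$ and expand
\[
(\ehdp u_m, \ehdp u_l)_{L^2(\partial SM)} = \sum_{j=1}^{d-1} \int_{\partial M} \int_{S_x M} \bigl((\ehdp u_m) \cdot E_j\bigr)\bigl((\ehdp u_l) \cdot E_j\bigr) \, dS_x(v)\, dV^{d-1}(x).
\]
Each inner integral vanishes for $m \neq l$ by orthogonality of distinct-degree spherical harmonics on $S_x M$. Together with $L^2$-orthogonality this yields $\|u\|^2_{H^1_T} = \sum_m \|u_m\|^2_{H^1_T}$; the ``in particular'' statement follows at once because $u \mapsto \sum_{l=0}^m u_l$ is then the orthogonal projection onto $\bigoplus_{l=0}^m \Omega_l$ for the $H^1_T$ inner product.

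The only real obstacle is the degree-preservation property of $\ehdp$; once that is secured, the rest is bookkeeping. An alternative route would be to verify directly that $[\ehdp, \Delta] = 0$ on $\partial SM$ via the commutators of Section~\ref{section_pestov_general_connection}, but the parallel-transport argument feels more transparent and coordinate-free.
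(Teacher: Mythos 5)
Your argument is correct, and it takes a genuinely more direct route than the paper's. The key fact you isolate --- that for $e$ tangent to $\p M$ the tangential horizontal derivative $v \mapsto \langle \ehd u(x,v), e\rangle$ of $u \in \Omega_m$ stays in $H_m(S_xM)$, because the horizontal lift of a boundary curve is $t \mapsto (\gamma(t), P_t v)$ and parallel transport is a linear isometry of the fibres --- is essentially the paper's observation that horizontal lifts commute with the vertical Laplacian (item (1) of its proof, verified there in geodesic coordinates). The paper, however, uses that observation only to show that the normal extension of $u$ into a collar $SU_{\varepsilon}$ commutes with the fibrewise projections; it then evaluates the $L^2(SU_{\varepsilon})$-norm of the full horizontal gradient of this extension via the energy identity \eqref{eq:LRS} from \cite{LRS18}, invokes the localization of the $Z$-term (Lemma \ref{lemma_curvature_fourier_localization}, which rests on the Hodge-theoretic Lemma \ref{lemma_localization_newproof}), and recovers the boundary norm by letting $\varepsilon \to 0$. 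You instead apply degree preservation directly to $\langle \ehdp u, E_j\rangle$ on $\p SM$ and finish by fibrewise orthogonality, avoiding the collar extension, the identity \eqref{eq:LRS}, the $Z$-localization and the limiting argument; this is a real simplification for this particular lemma, whereas the paper's route has the advantage of reusing only machinery it has already established. Two points should be tightened: pairwise $H^1_T$-orthogonality of the $u_m$ by itself does not yield the Parseval identity --- you also need the expansion to converge in $H^1_T$, equivalently that $\ehdp$ commutes with the fibrewise projections, $\langle \ehdp u_m(x,\cdot), E_j\rangle = \Pi_m \langle \ehdp u(x,\cdot), E_j\rangle$. This follows from your own parallel-transport computation (for instance via the reproducing kernel of $H_m(S_xM)$, which is invariant under the isometry $P_t$), and then the claimed identity is just the fibrewise Parseval theorem applied to $\langle \ehdp u(x,\cdot), E_j\rangle$ for each $x$ and $j$. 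Finally, since the statement concerns all $u \in H^1_T(\p SM)$, argue first for smooth $u$ and conclude by density of $C^{\infty}(\p SM)$ in $H^1_T(\p SM)$, as the paper does.
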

\begin{proof} The proof is somewhat indirect and is based on the following observations.
\begin{enumerate}
\item Let $W$ be a vector field on $M$ and let $\mathbb{W}=(W,0)$ be its horizontal lift to $SM$. Then
$\Delta \mathbb{W}=\mathbb{W}\Delta$ where $\Delta$ is the vertical Laplacian. This can be seen by taking a geodesic coordinate neighbourhood around a point $x$, so that $\p_j g_{kl}(x) = 0$ for all $j,k.l$. In that case if we write $W=w^{i}(x)\partial_{x_{i}}$, then
$(\mathbb{W}u)(x,v)=w^{i}(x)\frac{\partial u}{\partial x_{i}}$ and thus $\mathbb{W}:\Omega_{m}\to\Omega_{m}$. (Another way to prove this is to check that $[\delta_j, \Delta] = 0$, using the notation and commutator formulas in \cite[Appendix A]{PSU_hd}.)
\item There is a neighbourhood $U_{\varepsilon}$ of $\partial M$ in $M$ diffeomorphic to $\partial M\times [0,\varepsilon)$
via $\partial M\times [0,\varepsilon)\ni (x,t)\mapsto \exp_{x}(-t\nu(x))\in U_{\varepsilon}$. This allows us to naturally extend to $U_{\varepsilon}$ the exterior unit normal $\nu$ to a vector field, still denoted by $\nu$.
\item A smooth function $u\in C^{\infty}(\partial SM)$ can be extended to a smooth function $u^{\bbnu}\in C^{\infty}(SU_{\varepsilon})$ simply by making it constant on the orbits of the flow of $\bbnu$, the horizontal lift of $\nu$.
By item (1) we have
\begin{equation}
(u_{m})^{\bbnu}=({u}^{\bbnu})_{m}
\label{eq:extcomm}
\end{equation}
\item 
Let $f_t$ be the flow of $\bbnu$ in $SM$, and let $V_{\eps}$ be the neighbourhood of $\p SM$ in $SM$ diffeomorphic to $\p SM \times [0,\eps)$ via $(x,v,t) \mapsto f_{-t}(x,v)$. Since $f_{-t}(x,v) = (x(t), v(t))$ where $x(t) = \exp_x(-t\nu)$ is the normal geodesic and $v(t)$ is the parallel transport of $v$ along $x(t)$, one has $V_{\eps} = SU_{\eps}$ (the map $v \mapsto v(t)$ is bijective from $S_x M$ onto $S_{x(t)} M$).
\end{enumerate}

Let $u \in C^{\infty}(\p SM)$. The fact that $\bbnu (u^{\bbnu}) = 0$ implies that $\ehdp u=(\ehd u^{\bbnu})|_{\partial SM}$ and thus for $(x,v)\in \partial SM$
we have
\[\lim_{\varepsilon\to 0}\frac{1}{\varepsilon}\int_{0}^{\varepsilon} |\ehd u^{\bbnu}(x,v,t)|^{2}\,dt=|\ehdp u(x,v)|^{2}.\]
Integrating over $\p SM$ and using that $V_{\eps} = S U_{\eps}$ by (4), this gives
\begin{equation}
\lim_{\varepsilon\to 0}\frac{1}{\varepsilon}\norm{\ehd u^{\bbnu}}^{2}_{L^{2}(SU_{\varepsilon})}=\norm{\ehdp u}^{2}_{L^{2}(\partial SM)}.
\label{eq:limit}
\end{equation}


We now recall that it is possible to write for any $w\in C^{\infty}(SU_{\varepsilon})$ (all norms in $L^{2}(SU_{\varepsilon})$), cf. \cite[proof of Lemma 5.1]{LRS18}:
\begin{equation}
 \norm{\ehd w}^2=\norm{Z(w)}^{2}+\norm{X_{-}w_{1}}^{2}+\sum_{l=1}^{\infty}A(d,l)\norm{X_{+} w_{l-1}}^{2}+B(d,l)\norm{X_{-} w_{l+1}}^{2},
\label{eq:LRS}
\end{equation}
where $A(d,l)=2+\frac{d-2}{2}$ and $B(d,l)=2+\frac{1}{d+l-2}$. By Lemma \ref{lemma_curvature_fourier_localization} one may write $\norm{Z(w)}^2 = \sum \norm{Z(w_m)}^2$, and thus using \eqref{eq:LRS} for $w=u^{\bbnu}$ and $w = (u^{\bbnu})_m$ for each $m$ we deduce
\[
\norm{\ehd u^{\bbnu}}^{2}_{L^{2}(SU_{\varepsilon})} = \sum_{m=0}^{\infty} \norm{\ehd (u^{\bbnu})_{m}}^{2}_{L^{2}(SU_{\varepsilon})}.
\]
Dividing this by $\varepsilon$ and taking the limit as $\varepsilon\to 0$, the identities \eqref{eq:extcomm} and \eqref{eq:limit}
yield 
\[
\norm{\ehdp u}^{2}_{L^{2}(\partial SM)} = \sum_{m=0}^{\infty} \norm{\ehdp u_m}^{2}_{L^{2}(\partial SM)}.
\]
This implies the desired claim for $u \in C^{\infty}(\p SM)$, and the result follows since $C^{\infty}(\p SM)$ is dense in $H^1_T(\p SM)$. (The density claim can be proved by using a partition of unity, convolution approximation in coordinate neighborhoods, and the Friedrichs lemma \cite[Lemma 17.1.5]{Hormander}.)
\end{proof}

We now put the arguments above together to derive:

\begin{Theorem} \label{thm:main0}
Let $(M,g)$ be a simply connected compact manifold with strictly convex boundary and
non-positive sectional curvature. Given $m\geq 0$ there is a constant $C>0$ such that for any $f\in H^1(S^{m}(T^*M))$, one has 
\[\norm{f_{s}}_{L^{2}(M)}\leq C\norm{u^f}_{H_{T}^{1/2}(\partial SM)}.\]
\end{Theorem}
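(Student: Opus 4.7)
The plan is to combine the preceding estimate \eqref{fs_initial_estimate} with a suitable control of $\norm{q}_{H^{1/2}(\partial M)}$ in terms of $\norm{u^f}_{H^{1/2}_T(\partial SM)}$. When $m=0$ the result is immediate from \eqref{f_tensor_initial_estimate} in Theorem \ref{thm_stability_transport_second}, since then $f_s = f$ and no $q$-term appears; so I assume $m \geq 1$ throughout.

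The derivation leading to \eqref{fs_initial_estimate} --- namely the solenoidal decomposition $f = f_s + d^s p$ with $p|_{\partial M}=0$, the integration-by-parts identity \eqref{fminusdsq_estimate} using $\delta^s f_s = 0$, the splitting \eqref{iotanufs_estimate} via Young's inequality with a small parameter $\varepsilon$, the trace bound Lemma \ref{lemma:parts} to absorb $\varepsilon \norm{\iota_\nu f_s}_{H^{-1/2}(\partial M)}^2$ into $\norm{f_s}_{L^2(M)}^2$, and Theorem \ref{thm_stability_transport_second} itself --- already gives
\[
\norm{f_s}_{L^2(M)}^2 \lesssim \norm{u^f}_{H^{1/2}_T(\partial SM)}^2 + \norm{q}_{H^{1/2}(\partial M)}^2.
\]
Hence the only remaining task is to bound the last term. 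I would first invoke Lemma \ref{lemma:aux} (with $m$ replaced by $m-1$) to pass from the tensor norm of $q$ to a function norm on $\partial SM$, obtaining $\norm{q}_{H^{1/2}(\partial M)} \lesssim \norm{\ell_{m-1} q}_{H^{1/2}_T(\partial SM)}$. By construction in Theorem \ref{thm_stability_transport_second}, $\ell_{m-1} q$ is precisely the $L^2$-orthogonal projection of $u^f|_{\partial SM}$ onto the vertical spherical harmonic modes of degrees $m-1, m-3, \dots$, so it suffices to show that this projection is bounded on $H^{1/2}_T(\partial SM)$.

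For this, I would observe that the projection onto any fixed subset of frequencies is obviously a contraction on $L^2(\partial SM)$, and is also a contraction on $H^1_T(\partial SM)$ as an immediate consequence of Lemma \ref{lemma:fbound}. Complex interpolation between these two endpoints then yields that it is bounded by $1$ on $H^{1/2}_T(\partial SM)$ as well, so $\norm{\ell_{m-1} q}_{H^{1/2}_T(\partial SM)} \leq \norm{u^f}_{H^{1/2}_T(\partial SM)}$. Inserting this into the displayed inequality above completes the proof. All of the geometric difficulty has been expended in the earlier sections (the iterated shifted frequency-localized Pestov identity and the fact that the boundary term $P$ localizes in frequency); the hardest conceptual ingredient left at this stage is the interpolation of the frequency projection on $\partial SM$, and that step rests entirely on Lemma \ref{lemma:fbound}, which is why the frequency localization of the $H^1_T$-norm was established in the preceding section.
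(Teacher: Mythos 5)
Your proposal is correct and follows essentially the same route as the paper: reduce to \eqref{fs_initial_estimate}, apply Lemma \ref{lemma:aux} to $q$, identify $\ell_{m-1}q$ with the projection of $u^f|_{\partial SM}$ onto the odd (or even) frequencies below $m$, and bound that projection on $H^{1/2}_T(\partial SM)$ by interpolating the trivial $L^2$ contraction with the $H^1_T$ bound from Lemma \ref{lemma:fbound}. No gaps; this matches the paper's argument step for step.
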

\begin{proof} For $m=0$ this is just \eqref{f_tensor_initial_estimate}, so we assume $m \geq 1$. Combining \eqref{fs_initial_estimate} and Lemma \ref{lemma:aux} we derive 
\[\norm{f_{s}} \lesssim \norm{u^f}_{H^{1/2}_{T}} + \norm{\ell_{m-1} q}^2_{H^{1/2}_{T}}.\]
Recall that $\ell_{m-1} q=\sum_{k=0}^{ [(m-1)/2]}u_{m-2k-1}^f$. Interpolating the bound $\norm{\sum_{k=0}^{ [(m-1)/2]}u_{m-2k-1}}_{L^2(\p SM)} \leq \norm{u}_{L^2(\p SM)}$ for $u \in L^2(\p SM)$ with the bound in Lemma \ref{lemma:fbound} gives 
\[\left\|\sum_{k=0}^{ [(m-1)/2]}u_{m-2k-1}\right\|_{H^{1/2}_{T}} \leq \norm{u}_{H^{1/2}_{T}}, \qquad u \in H^{1/2}_T(\p SM), \]
and the result follows by taking $u = u^f$.
\end{proof}

We can refine this further and prove Theorem \ref{thm:main_final} in the introduction. Define the space $H_{T}^{1/2}(\partial_{+}SM)$ as the interpolation space between $H^1_T(\partial_+ SM)$ and $L^2(\partial_+ SM)$. 

\begin{proof}[Proof of Theorem \ref{thm:main_final}]
Theorem \ref{thm:main0} gives 
\begin{equation} \label{fs_honezero_estimate}
\norm{f_{s}}_{L^{2}(M)}\leq C\norm{E_{0}(If)}_{H_{T}^{1/2}(\partial SM)}
\end{equation}
where $E_0$ is extension by zero from $\p_+ SM$ to $\p SM$. We define $H_{T,0}^{1}(\partial_{+}SM)$
as the closure of $C^{\infty}_{c}((\partial_{+}SM)^{\mathrm{int}})$ with respect to the $H^{1}_{T}$-norm, and $H_{T,0}^{1/2}(\partial_{+}SM)
$ as the interpolation space between $L^{2}$ and $H_{T,0}^{1}(\partial_{+}SM)$. Since $E_0$ is bounded $H^1_{T,0}(\partial_+ SM) \to H^1_T(\partial SM)$ and $L^2(\partial_+ SM) \to L^2(\partial SM)$, it is also bounded 
\begin{equation} \label{ezero_hhalf_estimate}
E_{0}: H_{T,0}^{1/2}(\partial_{+}SM)\to H^{1/2}_{T}(\partial SM).
\end{equation}
Now, if $f \in H^1(S^{m}(T^*M))$, then $If$ is in $H_{0}^{1}(\partial_{+}SM)$ by Lemma \ref{lemma_uf_regularity} and hence also in the larger space $H_{T,0}^{1/2}(\partial_{+}SM)$. Combining \eqref{fs_honezero_estimate} and \eqref{ezero_hhalf_estimate} proves the result.
\end{proof}

\begin{Remark} Theorem \ref{thm:main_final} remains true for $f \in H^{1/2}(S^m(T^* M))$ by density, since $I$ is bounded $H^{1/2}(S^m(T^* M)) \to H^{1/2}(\p_+ SM)$ by \cite[Theorem 4.2.1]{Sh} and interpolation. It would be interesting if one could prove Theorem \ref{thm:main_final} for $f \in L^2(S^m(T^* M))$. However, in general we do not know if $I$ is bounded $L^{2}(S^m(T^* M))\to H^{1/2}_{T}(\partial_{+}SM)$.
Also, our approach with the Pestov identity as it stands is unable to produce stability estimates for higher order Sobolev norms.
\end{Remark}

\bibliographystyle{alpha}

\end{document}